\numberwithin{equation}{section}
\newtheorem{theorem}{\bf Theorem}[section]
\newtheorem{definition}[theorem]{\bf Definition}
\newtheorem{lemma}[theorem]{\bf Lemma}
\newtheorem{example}[theorem]{\bf Example}
\def\real{\mathop{\mathrm{Re}}}
\newcommand {\mat}  [1] {\left[\begin{array}{#1}}
\newcommand {\rix}      {\end{array}\right]}
\newcommand{\eproof}{\space
    {\ \vbox{\hrule\hbox{\vrule height1.3ex\hskip0.8ex\vrule}\hrule}}\par}
\def\diag{\mathop{\mathrm{diag}}}
\def\rank{\mathop{\mathrm{rank}}}
\newcommand{\nrm}[1]{{\left\vert\kern-0.25ex\left\vert\kern-0.25ex\left\vert #1 
    \right\vert\kern-0.25ex\right\vert\kern-0.25ex\right\vert}}
\newcommand{\C}{{\mathbb C}}
\newcommand{\R}{{\mathbb R}}
\journal{a journal}
\begin{document}
\begin{frontmatter}

\title{Structured eigenvalue backward errors of Rosenbrock systems and related $\mu$-value problems}


\author{Anshul Prajapati\fnref{iitd}}
\author{Punit Sharma\fnref{iitd}}
\fntext[iitd]{Department of Mathematics, Indian Institute of Technology Delhi, Hauz Khas, 110016, India; \texttt{\{maz198078, punit.sharma\}@maths.iitd.ac.in.}
A.P. acknowledges the support of the CSIR Ph.D. grant by Ministry of Science \& Technology, Government of India. P.S. acknowledges the support of the SERB- CRG grant (CRG/2023/003221) and SERB-MATRICS grant by Government of India.}

\begin{abstract}
	In this paper, we compute the structured eigenvalue backward error of a Rosenbrock system matrix $S(z)=\mat{cc} A-zI & B \\ C & P(z) \rix$ for a given scalar $\lambda\in \C$. 
	We have developed simplified formulas for the structured eigenvalue backward error of the Rosenbrock system matrix, considering both full and partial block perturbations. These formulas involve computing 
	structured $\mu$-values of a rectangular matrix under rectangular-block-diagonal perturbations.
	 For the reformulated  $\mu$-value problem, we provide an explicit expression using partial isometric matrices and also obtain a computable upper bound, which is equal to the $\mu$-value when the pertrubation matrix has no more than three blocks at the diagonal.
	 The results are illustrated through numerical experiments.
\end{abstract}

\begin{keyword}
Rosenbrock system matrix, structured eigenvalue backward error, rational matrix function, structured singular value, $\mu$-value.

{\textbf{ AMS subject classification.}}
15A18, 15A22, 65F15, 
\end{keyword}

\end{frontmatter}


\section{Introduction}
Consider a standard linear time-invariant system
\begin{equation}
	\Sigma : \quad
	\begin{array}{rl}
		\dot{x}(t)&=Ax(t)+Bu(t)\\
		y(t)&=Cx(t)+P(\frac{d}{dt})u(t)
	\end{array},
\end{equation} 
where $\frac{d}{dt}$ is the differential operator, $A\in \C^{r,r}, B\in \C^{r,n}, C\in \C^{n,r}$ and $P(z)=\sum_{k=0}^d z^k A_k$ with $A_k\in \C^{n,n}$ is a matrix polynomial of degree $d$, $u(t)$ is the input vector, $x(t)$ is the state vector, and $y(t)$ is the output vector. The matrix polynomial given by 
\begin{equation}\label{mat:system}
	S(z)=\mat{cc} A-z I_r & B \\ C & P(z) \rix,
\end{equation}
is called as the \emph{Rosenbrock system matrix} associated with the LTI system $\Sigma$. A scalar $\lambda \in \C$ is said to be an \emph{eigenvalue} of $S(z)$ if $\det(S(\lambda)) = 0$. The eigenvalues of $S(z)$ are the invariant zeros of the LTI system $\Sigma$. In general, the transmission zeros of the LTI system $\Sigma$ are a subset of the invariant zeros. However, if the system $\Sigma$ is minimal or irreducible, meaning that $\rank(\mat{cc}A-z I_r & B\rix) =r$ and $\rank(\mat{cc}A^T-zI_r & C^T \rix) = r$ for all $z\in \C$, then the transmission zeros are the same as the invariant zeros. The location of these zeros is a critical factor in system theory, as it determines the system's behaviour. As if all the poles of the system $\Sigma$ are on the left half of the complex plane, then the system is stable. This motivates to consider the perturbation analysis of eigenvalues of the system matrix $S(z)$. 

The system matrix $S(z)$ can also arise from the realization of a rational matrix function. Specifically, suppose we have an $n\times n$ rational matrix function $G(z)$. A tuple $(A,B,C,P(z))$, with $A\in \C^{r,r},B\in \C^{r,n}, C\in \C^{n,r}$ and $P(z)$ an $n\times n$ matrix polynomial of degree $d$ is called a realization of $G(z)$ if
\begin{equation}
	G(z)=P(z)+C(zI_r-A)^{-1}B.
\end{equation}
Rosenbrock~\cite{Ros70}, showed that if the associated system matrix $S(z)$ has the least order, that is, it is minimal or irreducible, then the finite poles of $G(z)$ are exactly the finite zeros (eigenvalues) of $A-zI_r$, and the finite zeros of $G(z)$ are exactly the finite zeros of $S(z)$.

Backward error plays an important role in the stability analysis of algorithms. For matrix polynomials, eigenvalue/eigenpair backward errors have been extensively studied in the literature, see~\cite{MR2496422,MR2780396,MR3194659,MR3335496,MR4404572,Tis00}. However, little work has been done to exploit backward errors of rational matrix functions. In~\cite{MR4098788}, authors have studied the eigenpair backward errors of rational matrix functions, while in~\cite{PraS22b} the eigenvalue backward errors of rational matrix functions under structure-preserving perturbations were considered. 
Recently in~\cite{LuPSB25}, authors studied the eigenvalue backward error of Rosenbrock system matrix $S(z)$ when the perturbations are measured with respect to norm
\begin{equation}\label{eq:nrm}
	\nrm{S(s)}_F:=\sqrt{{\|A\|}_F^2+{\|B\|}_F^2+{\|C\|}_F^2+{\| {A_0}\|}_F^2+\cdots +{\| {A_d}\|}_F^2},
\end{equation}
where ${\|\cdot\|}_F$ stands for the Frobenius norm. The authors reformulated the backward error problems into a class of optimization problems involving sum of two generalized Rayleigh quotients, for which they proposed a solution using a Nonlinear Eigenvalue Problem with eigenvector dependency. 

In this paper, we consider the eigenvalue backward error of the Rosenbrock system matrix under various types of block perturbations to $S(z)$ with respect to the max norm, i.e., 
\begin{equation}
	\nrm{ S(z)}_\infty = {\max}\left\{ \|A\|, \|B\|, \|C\|, \|{A_0}\|, \ldots, \|{A_d}\| \right\},
\end{equation}
where $\|\cdot\|$ stands for the operator 2-norm or spectral norm of a matrix. This norm was used in the groundbreaking paper by Tisseur~\cite{Tis00} to study backward error and condition of polynomial eigenvalue problem. 

We note that the framework suggested in~\cite{LuPSB25} for $\nrm{S(s)}_F$ does not extend to the eigenvalue backward error of $S(z)$ for $\nrm{ S(z)}_\infty$. To tackle this, we use the structured $\mu$-value of a matrix.
The \emph{structured $\mu$-value} (a.k.a. \emph{structured singular value})  of $M\in \C^{k,p}$ with respect to the perturbation set $\mathcal{S}\subseteq \C^{p.k}$ is defined as
\begin{equation}\label{def:mu1}
	\mu_{\mathcal{S}}(M)=\left( \min \{\|\Delta\|\; : \; \Delta \in \mathcal{S} \text{ and } {\det}(I_p-\Delta M)=0\}\right)^{-1},
\end{equation}
where $I_p$ denotes the identity matrix of size $p\times p$. If there does not exist $\Delta\in \mathcal{S}$ such that ${\det}(I_p-\Delta M)=0$, then $\mu_{\mathcal{S}}(M)=0$. The $\mu$-value problems occur in stability analysis of uncertain systems and in the eigenvalue perturbation theory of matrices and matrix polynomials~\cite{Kar03,ZhoDG96,doyle82}.

The aim of this paper is two fold: i) to tackle the eigenvalue backward error problem of $S(z)$ for $\nrm{ S(z)}_\infty$, reduce it to an equivalent problem of computing the structured $\mu$-value of a rectangular matrix under rectangular-block-diagonal perturbations, and ii) to revisit the structured $\mu$-value for a rectangular matrix $M\in \C^{k, p}$ under the structured perturbation class 
\begin{equation}\label{eq:pertset}
	\mathcal{S}=\big\{\diag(\Delta_1,\Delta_2,\ldots,\Delta_r)\; : \; \Delta_i\in \C^{p_i,k_i} \text{ s.t. } \sum_{i=1}^r p_i =p, \sum_{i=1}^r k_i=k \big\},
\end{equation}
and generalize the $\mu$-value results in~\cite{doyle82} , where
the diagonal blocks $\Delta_i$'s in~\eqref{eq:pertset} were all considered to be square. 
We note, however, that this generalization is not immediate and requires a careful dealing of the rectangular diagonal blocks. 

The paper is organized as follows: 
In Section~\ref{sec:ebrr}, we compute the eigenvalue backward error of a Rosenbrock matrix considering both full and partial block perturbations to $S(z)$. Here, we develope a simplified characterization of the eigenvalue backward error in terms of computing structured $\mu$-value of an appropriate matrix. All the results pertaining to the structured $\mu$-value of a rectangular matrix under rectangular-block-diagonal perturbations are presented in Section~\ref{sec:muvalue}. 
Here, we generalize the results in~\cite{doyle82} for $\mu_{\mathcal S}(M)$, when $M$ is rectangular and $\mathcal S$ is defined by~\eqref{eq:pertset}. 
In Section~\ref{sec:num}, we illustrate the results obtained over some numerical examples.

\section{Eigenvalue backward error of Rosenbrock system matrix}\label{sec:ebrr}

In this section, we consider the eigenvalue backward error of Rosenbrock system matrix $S(z)$ of the form~\eqref{mat:system} with respect to various types of perturbations. We consider perturbations in block matrices $A$, $B$, $C$ and $P(z)$ of $S(z)$, respectively, as $\Delta_A \in \C^{r,r}$, $\Delta_B\in \C^{r,n}$, $\Delta_C\in \C^{n,r}$ and $\Delta_P(z) \in \C[z]^{n,n}$, where $\Delta_P(z)=\sum_{j=0}^d z^j\Delta_{A_j}$ with $\Delta_{A_j} \in \C^{n,n}$ for $j=0,\ldots,d$, and define the set of perturbations in all blocks of $S(z)$ as
\begin{equation}\label{def:pertset}
	\mathbb{S}(A,B,C,P):= \left\{\Delta S(z) \:\; : \; \Delta S(z)=\mat{cc} \Delta_A & \Delta_B\\ \Delta_C & \Delta_P \rix \; \text{where} \; \Delta_P(z)= \sum_{j=0}^d z^j\Delta_{A_j} \right\}.
\end{equation}
To measure the perturbations to the Rosenbrock system matrix $S(z)$, we consider the following norm on $\Delta S(z) \in \mathbb{S}(A,B,C,P)$:
\begin{equation}
	{\nrm{\Delta S(z)}}_\infty = \max\left\{ \|\Delta_A\|, \|\Delta_B\|, \|\Delta_C\|, \|\Delta_{A_0}\|, \ldots, \|\Delta_{A_d}\| \right\}.
\end{equation}
The structured eigenvalue backward error of an approximate eigenvalue of $S(z)$ with respect to structured perturbations from the set $\mathbb{S}(A,B,C,P)$ is defined as follows.
\begin{definition}\label{def:eigerror}
	Let $S(z)$ be a Rosenbrock system matrix of the form~\eqref{mat:system} and let $\lambda \in \C$. Then, 
	\begin{equation}\label{def:error}
		\eta^{\mathbb{S}}(\lambda,A,B,C,P):= \inf \left\{ {\nrm{\Delta S(\lambda)}}_\infty \; : \; \Delta S(\lambda) \in \mathbb{S}(A,B,C,P), \; {\det}(S(\lambda)-\Delta S(\lambda))=0 \right\}
	\end{equation}
	is called the structured eigenvalue backward error of $S(z)$ for $\lambda$ with respect to perturbations to the blocks $A,B,C$ and $P(z)$ of $S(z)$.
\end{definition}
Similarly, we can define the eigenvalue backward errors of $S(z)$ while giving partial perturbations to the blocks of $S(z)$. These cases of partial perturbations to $S(z)$ are discussed in Section~\ref{sec:parperbb}.

We use the following notaton throughout the paper. For a matrix $M \in \C^{k, p}$, $\sigma_{\max}(M)$ denotes the largest singular value of $M$ and $\rho(A)$ denotes the spectral radius of a square matrix $A$.  The zero matrix of size $m \times n $ is denoted by $0_{m,n}$ and the zero matrix of size $m \times m $ is denoted by $0_{m}$.  
Our aim is to reformulate the eigenvalue backward error problem~\eqref{def:error} into an equivalent problem of computing structured $\mu$-value~\eqref{def:mu1}  of some related matrix and then apply the $\mu$-value results obtained in Section~\ref{sec:muvalue} to estimate the eigenvalue backward error. The following two matrices 
$\tilde{J_1}\in\C^{(r+n)d,nd}$ and $\tilde{J_2}\in \C^{nd,(r+n)}$ will be frequently used in achieving this. 
\begin{equation}\label{mat:tildeJ1J2}
	\tilde{J_1}:=\mat{cccc}0_{r,n}&&& \\I_n&&&\\&0_{r,n}&& \\ &I_n&& \\ &&\ddots&\\ &&&0_{r,n} \\&&&I_n\rix \quad \text{and} \quad \tilde{J_2}:=\mat{cc}0_{n,r}&I_n \\0_{n,r}&I_n \\ \vdots & \vdots  \\0_{n,r}&I_n  \rix .
\end{equation}  

\subsection{Backward error with full perturbation}\label{subsec:full}

Consider the Rosenbrock system matrix $S(z)$ in the form~\eqref{mat:system}. To obtain, the eigenvalue backward error of $S(z)$ for a given $\lambda \in \C$, we consider the perturbation to the matrices $A$, $B$, $C$ and $P(z)$ from the set $\mathbb S(A,B,C,P)$ defined by~\eqref{def:pertset}. The corresponding eigenvalue backward error $\eta^{\mathbb{S}}(\lambda,A,B,C,P)$ is defined by~\eqref{def:error}.

The eigenvalue backward error $\eta^{\mathbb{S}}(\lambda,A,B,C,P)$ is always finite, as the perturbation 
$\Delta \tilde S(z)=\mat{cc}A & B \\ C & P(z)\rix$ will make the perturbed system matrix as a zero matrix. This implies that 
\[
\eta^{\mathbb{S}}(\lambda,A,B,C,P) \leq \max \left\{\|A\|,\|B\|,\|C\|,\|A_0\|,\ldots,\|A_d\| 
\right\}< \infty.
\]
Also $\eta^{\mathbb{S}}(\lambda,A,B,C,P)=0$, if $\lambda$ is an eigenvalue of $S(z)$. Thus, in the following, we assume that $\lambda$ is not an eigenvalue of $S(z)$, i.e., $(S(\lambda))^{-1}$ exists.

The system matrix $S(z)$ can be seen as a matrix polynomial with zero block structure of the form 
\begin{equation}\label{eq:zerosparsys}
S(z)=P_0+zP_1+z^2P_2+\cdots+z^dP_d,
\end{equation}
where
$ P_0=\mat{cc}A & B \\ C & A_0 \rix$,  $ P_1=\mat{cc}-I_r & 0 \\ 0 & A_1 \rix$,  and $P_i= \mat{cc}0 & 0 \\ 0 & A_i \rix$ for $i=2,\ldots,d$.
Thus computing the eigenvalue backward error of $S(z)$ is equivalent to finding the eigenvalue backward error of $S(z)$ as a matrix polynomial in the form~\eqref{eq:zerosparsys} preserving the zero block structures of the coefficient matrices $P_i$'s.
If the Rosenbrock system matrix is considered as a matrix polynomial~\eqref{eq:zerosparsys} and the block struture in the coefficient matrices $P_i$'s  is ignored, then the \emph{unstructured eigenvalue backward error} of $S(z)$ can be defined as
\begin{equation}\label{def:eigerror-unst}
	\eta(\lambda,S):= \inf \left\{ \max\{\|\Delta_{P_0}\|,\ldots,\|\Delta_{P_d}\|\}\; : \; \Delta_{P_k} \in \C^{r+n,r+n}, \; {\det}(\sum_{k=0}^d \lambda^k (P_k - \Delta_{P_k}))=0 \right\}.
\end{equation}
A computable formula for $\eta(\lambda,S)$ is given in~\cite{AhmA09} as
	\begin{equation*}
	\eta(\lambda,S)=\frac{\sigma_{\min}(S(\lambda))}{\|(1,\lambda,\lambda^2,\ldots,\lambda^d)\|_{1}},
\end{equation*}
	where $\|(1,\lambda,\lambda^2,\ldots,\lambda^d)\|_{1} = 1+|\lambda| +|\lambda^2| + \cdots + |\lambda^d|$ and $\sigma_{\min}(S(\lambda))$ is the smallest singular value of 
	$S(\lambda)$. 
We note that the norm considered in~\eqref{def:error} to obtain the structured eigenvalue backward error and the one considered in~\eqref{def:eigerror-unst} for unstructured eigenvalue backward error are different. Therefore we connot directly compare them.


The following result reduces the problem of computing the eigenvalue backward error~\eqref{def:error} to a structured $\mu$-value problem of the form~\eqref{def:mu1}.

\begin{theorem}\label{theorem:ABCP}
Consider the Rosenbrock system matrix $S(z)$ of the form~\eqref{mat:system} and let $\lambda\in \C$ be such that $(S(\lambda))^{-1}$ exists. Let $J_1\in\C^{(d+1)(r+n),(d+2)n+2r}$ and $J_2\in\C^{(d+2)n+2r,(r+n)}$ be defined as
\begin{equation}\label{mat:J1J2}
	J_1:=\mat{cccc|c}I_{r}&I_r&&&\\&&I_n&I_n& \\ \hline&&&&\tilde{J_1} \rix \quad \text{and}\quad
	J_2:=\mat{c} \begin{array}{cc}I_{r}&\\&I_n\\I_r&\\&I_n \end{array}\\ \hline \begin{array}{c}\tilde{J_2}\end{array} \rix,
\end{equation}
where $\tilde{J_1}$ and $\tilde{J_2}$ are defined by~\eqref{mat:tildeJ1J2}. Then,
\begin{equation}\label{error:ABCP}
\eta^{\mathbb{S}}(\lambda,A,B,C,P)=(\mu_{\mathcal{S}}(M))^{-1},
\end{equation}
where $\mu_{\mathcal{S}}(M)$ is the $\mu$-value defined by~\eqref{def:mu1} for $M=J_2(S(\lambda))^{-1}[I_{r+n}\;\lambda I_{r+n}\; \cdots \; \lambda^dI_{r+n}]J_1$ and $\mathcal{S}=\left\{{\diag}(\Delta_1,\Delta_2,\ldots,\Delta_{d+4})\; : \; \Delta_1\in\C^{r,r},\; \Delta_2\in \C^{r,n},\; \Delta_3\in \C^{n,r} ,\; \Delta_i\in \C^{n,n},\; i=4,\ldots,d+4\right\}$.
\end{theorem}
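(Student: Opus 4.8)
The plan is to massage the determinant condition $\det(S(\lambda)-\Delta S(\lambda))=0$ into the form $\det(I-\Delta M)=0$ with $\Delta\in\mathcal{S}$ and $M$ as in the statement, matching norms on both sides so that the optimal perturbation size equals $(\mu_{\mathcal S}(M))^{-1}$. First I would write $S(\lambda)-\Delta S(\lambda) = S(\lambda)\bigl(I_{r+n} - (S(\lambda))^{-1}\Delta S(\lambda)\bigr)$, using the standing assumption that $(S(\lambda))^{-1}$ exists, so that $\det(S(\lambda)-\Delta S(\lambda))=0$ is equivalent to $\det\bigl(I_{r+n}-(S(\lambda))^{-1}\Delta S(\lambda)\bigr)=0$.

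The core step is a factorization of the perturbation evaluated at $\lambda$. Writing $\Delta S(\lambda)=\begin{bmatrix}\Delta_A & \Delta_B\\ \Delta_C & \sum_{j=0}^d\lambda^j\Delta_{A_j}\end{bmatrix}$, I want to exhibit $\Delta S(\lambda)$ as a product $[I_{r+n}\;\lambda I_{r+n}\;\cdots\;\lambda^d I_{r+n}]\,J_1\,\Delta\,J_2$ (up to the obvious reordering), where $\Delta=\diag(\Delta_1,\dots,\Delta_{d+4})$ with $\Delta_1=\Delta_A$, $\Delta_2=\Delta_B$, $\Delta_3=\Delta_C$, $\Delta_4=\Delta_{A_0}$, and $\Delta_{j+4}=\Delta_{A_j}$ for $j=1,\dots,d$. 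The matrices $J_1$ and $J_2$ are exactly the bookkeeping devices that distribute the four ``constant'' blocks $\Delta_A,\Delta_B,\Delta_C,\Delta_{A_0}$ into the right $(r+n)\times(r+n)$ slot and route the higher-order blocks $\Delta_{A_1},\dots,\Delta_{A_d}$ through the auxiliary $\tilde J_1,\tilde J_2$ so that, after multiplication by $[I\;\lambda I\;\cdots\;\lambda^d I]$, they reassemble into $\sum_{j}\lambda^j\Delta_{A_j}$ in the $(2,2)$ block and contribute nothing elsewhere. This is the step I expect to be the main obstacle: one has to check, block row by block row, that $J_1$, $\tilde J_1$ and $J_2$, $\tilde J_2$ are set up so the redundant copies of $I_r$ and $I_n$ (and the zero blocks $0_{r,n}$) cancel and add correctly — in particular that the $-I_r$ coming from $P_1$ plays no role here since we perturb only $A,B,C,P$, and that the degree-$d$ polynomial structure of $\Delta_P$ is faithfully encoded. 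I would verify this by direct computation on a generic $\Delta S(\lambda)$.

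Granting the factorization, set $N:=[I_{r+n}\;\lambda I_{r+n}\;\cdots\;\lambda^d I_{r+n}]\,J_1$, so $\Delta S(\lambda)=N\,\Delta\,J_2$ (modulo the permutation identifying the block orders), and $M=J_2(S(\lambda))^{-1}N$. Then, using the Sylvester determinant identity $\det(I-XY)=\det(I-YX)$,
\[
\det\bigl(I_{r+n}-(S(\lambda))^{-1}\Delta S(\lambda)\bigr)
=\det\bigl(I_{r+n}-(S(\lambda))^{-1}N\,\Delta\,J_2\bigr)
=\det\bigl(I_{(d+2)n+2r}-\Delta\,J_2(S(\lambda))^{-1}N\bigr)
=\det(I-\Delta M).
\]
Hence $\det(S(\lambda)-\Delta S(\lambda))=0$ iff $\det(I-\Delta M)=0$.

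Finally I would reconcile the two optimization problems. The norm ${\nrm{\Delta S(\lambda)}}_\infty=\max\{\|\Delta_A\|,\|\Delta_B\|,\|\Delta_C\|,\|\Delta_{A_0}\|,\dots,\|\Delta_{A_d}\|\}$ is exactly $\|\Delta\|$ for $\Delta=\diag(\Delta_1,\dots,\Delta_{d+4})\in\mathcal S$, since the spectral norm of a block-diagonal matrix is the max of the spectral norms of its blocks, and the correspondence $\Delta S(\lambda)\leftrightarrow\Delta$ is a norm-preserving bijection between $\mathbb S(A,B,C,P)$ evaluated at $\lambda$ and $\mathcal S$. Therefore the infimum defining $\eta^{\mathbb S}(\lambda,A,B,C,P)$ in~\eqref{def:error} equals $\min\{\|\Delta\|:\Delta\in\mathcal S,\ \det(I-\Delta M)=0\}$, which by~\eqref{def:mu1} is $(\mu_{\mathcal S}(M))^{-1}$; if no such $\Delta$ exists both sides are $+\infty$ versus $\mu_{\mathcal S}(M)=0$, consistent with the convention. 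I would close by noting the infimum is attained (the set of admissible $\Delta$ is closed and the norm is coercive on it once nonempty), so equality rather than just an inequality holds, giving~\eqref{error:ABCP}.
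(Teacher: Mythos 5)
Your proposal is correct and follows essentially the same route as the paper: factor the perturbation as $\Delta S(\lambda)=[I_{r+n}\;\lambda I_{r+n}\;\cdots\;\lambda^d I_{r+n}]J_1\Delta J_2$ with $\Delta=\diag(\Delta_A,\Delta_B,\Delta_C,\Delta_{A_0},\ldots,\Delta_{A_d})$, apply the Sylvester identity $\det(I-XY)=\det(I-YX)$ to reduce the singularity condition to $\det(I-\Delta M)=0$, and observe that $\nrm{\Delta S(\lambda)}_\infty=\|\Delta\|$ since the spectral norm of a block-diagonal matrix is the maximum of the blocks' norms. The factorization you flag as the main obstacle does check out by direct block multiplication (no reordering or permutation is actually needed; the block order of $\Delta$ already matches $\Delta_1,\ldots,\Delta_{d+4}$), so there is no gap.
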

\begin{proof}
From~\eqref{def:error}, the eigenvalue backward error $\eta^{\mathbb{S}}(\lambda,A,B,C,P)$ is given by
\begin{equation}\label{error:ABCP1}
\eta^{\mathbb{S}}(\lambda,A,B,C,P):= \inf \left\{ {\nrm{\Delta S(\lambda)}}_{\infty} \; : \; \Delta S(z) \in \mathbb{S}(A,B,C,P), \; {\det}(S(\lambda)-\Delta S(\lambda))=0 \right\},
\end{equation}
where $\mathbb{S}(A,B,C,P)$ is defined by~\eqref{def:pertset}. Observe that for a given 
$\Delta S(z)= \mat{cc} \Delta_A & \Delta_B \\ \Delta_C & \sum_{j=0}^d\Delta_{A_j}  \rix \in \mathbb{S}(A,B,C,P)$, we have
\begin{eqnarray*}
{\det}(S(\lambda)-\Delta S(\lambda))=0 & \iff {\det}\bigg ( S(\lambda) - \mat{cccc}I_{r+n} & \lambda I_{r+n} & \cdots & \lambda^{d}I_{r+n} \rix \mat{cc}\Delta_A & \Delta_B\\ \Delta_C & \Delta_{A_0} \\ \hline 0_{r, r}&0_{r, n}\\0_{n, r}&\Delta_{A_1} \\ \hline \vdots & \vdots \\ \hline  0_{r, r}&0_{r, n}\\0_{n, r}&\Delta_{A_d} \rix \bigg)=0
\end{eqnarray*}
if and only if 
\begin{equation*}
	{\det}\Big(S(\lambda)-\mat{cccc}I_{r+n} & \lambda I_{r+n} & \cdots & \lambda^{d}I_{r+n} \rix J_1 \Delta J_2 \Big)=0, 
\end{equation*}
where $J_1$, $J_2$ are defined by~\eqref{mat:J1J2} and $\Delta\in\C^{(d+2)n+2r,(d+2)n+2r}$ is a block diagonal matrix defined as
\begin{equation}\label{delta:ABCP1}
	\Delta := {\diag}(\Delta_A,\Delta_B,\Delta_C,\Delta_{A_0},\ldots, \Delta_{A_d}),
\end{equation}
which can be equivalently written as 
\begin{equation}\label{det:ABCPPP}
	{\det}\left(I_{(d+2)n+2r}-\Delta J_2 S(\lambda)^{-1} [I_{r+n}\; \lambda I_{r+n} \; \cdots \; \lambda^{d}I_{r+n}] J_1\right)=0.
\end{equation}
Also note that 
\begin{equation}\label{delta:ABCP22}
	{\nrm{\Delta S(z)}}_{\infty} =\max \left\{\|\Delta_A\|,\|\Delta_B\|,\|\Delta_C\|,\|\Delta_{A_0}\|,\ldots,\|\Delta_{A_d}\| 
	\right\}=\|\Delta\| = \sigma_{\max}(\Delta).
\end{equation}
Thus using~\eqref{det:ABCPPP}-\eqref{delta:ABCP22} in~\eqref{error:ABCP1}, we get
\begin{equation}\label{eq:reform1}
	\eta^{\mathbb{S}}(\lambda,A,B,C,P)=\inf \left\{ \sigma_{\max}(\Delta)\; :\; \Delta S(z) \in \mathbb{S}(A,B,C,P),\;{\det}(I_{(d+2)n+2r}-\Delta M)=0 \right\},
\end{equation}
where $M=J_2 (S(\lambda))^{-1} [I_{r+n}\; \lambda I_{r+n} \; \cdots \; \lambda^{d}I_{r+n}] J_1$ and $\Delta$ is defined by~\eqref{delta:ABCP1} for  $\Delta S(z)= \mat{cc} \Delta_A & \Delta_B \\ \Delta_C & \sum_{j=0}^d\Delta_{A_j}  \rix \in \mathbb{S}(A,B,C,P)$.
By setting $\mathcal{S}=\{{\diag}(\Delta_1,\Delta_2,\ldots,\Delta_{d+4})\; : \; \Delta_1\in\C^{r,r},$ $\; \Delta_2\in \C^{r,n},\; \Delta_3\in \C^{n,r} ,\; \Delta_i\in \C^{n,n},\; i=4,\ldots,d+4\}$,  \eqref{eq:reform1} yields that 
\begin{eqnarray*}\label{eq:reform2}
	\eta^{\mathbb{S}}(\lambda,A,B,C,P)&=&\inf \left\{ \sigma_{\max}(\Delta)\; :\; \Delta  \in \mathcal{S},\;{\det}(I_{(d+2)n+2r}-\Delta M)=0 \right\} \\
	&=& (\mu_{\mathcal{S}}(M))^{-1}.
\end{eqnarray*}
This completes the proof.
\end{proof}


\subsection{Backward error with partial perturbation}\label{sec:parperbb}

In this section, we consider eigenvalue backward error while considering partial perturbations to the blocks $A,B,C$, and $P(z)$ of $S(z)$. We divide the partial perturbations in three types (i)~perturbing only one of the blocks from $A,B,C$, and $P(z)$ of $S(z)$; (ii)~perturbing any two blocks from $A,B,C$, and $P(z)$ of $S(z)$, i.e., \{$A$ and $B$\}, \{$A$ and $C$\}, \{$A$ and $P(z)$\}, \{$B$ and $C$\}, \{$B$ and $P(z)$\} and \{$C$ and $P(z)$\};
(iii)~perturbing any three blocks  from $A,B,C$, and $P(z)$ of $S(z)$, i.e., \{$A$, $B$ and $C$\}, \{$A$, $B$ and $P(z)$\}, \{$A$, $C$ and $P(z)$\} and \{$B$, $C$ and $P(z)$\}.

Analogous to Section~\ref{subsec:full}, we define the eigenvalue backward error of $\lambda$ for $S(z)$ when only specific blocks from $A,B,C$, and $P(z)$ of $S(z)$ are subject to perturbation. This can be achieved by restricting the perturbation set
in~\eqref{def:pertset} and then define the corresponding backward error as in~\eqref{def:error}. 
For example, if we consider the case when only block $A$ of $S(z)$ is subject to perturbation, then the corresponding perturbation set in~\eqref{def:pertset} would be denoted by $\mathbb S (A)$ and defined as 
\[
\mathbb{S}(A):=\mathbb{S}(A,0,0,0)=\left\{ \Delta S(z) \; : \; \Delta_A \in \C^{r,r},\;\Delta S(z)=\mat{cc}\Delta_A &0\\0 & 0 \rix \right\}.
\]
In this case, the eigenvalue backward error of $S(z)$ for a given $\lambda \in \C$ with respect to perturbations from the set $\mathbb S(A)$ will be denoted by $\eta^{\mathbb{S}}(\lambda,A)$ and defined as 
\begin{equation}\label{eq:error_delA}
	\eta^{\mathbb{S}}(\lambda,A):=\inf\left\{ {\nrm{\Delta S(\lambda)}}_{\infty}\; : \; \Delta S(z)\in \mathbb{S}(A),\; {\rm det}(S(\lambda)-\Delta S(\lambda))=0 \right\}.
\end{equation}

\subsubsection{Perturbing only one block of $S(z)$ at a time}\label{sec:perone}

Here, we consider the backward error when we allow perturbation only in one of the blocks from $A$, $B$, $C$, and $P(z)$ of $S(z)$. The backward error is denoted by $\eta^{\mathbb{S}}(\lambda,A)$ and defined by~\eqref{eq:error_delA} when the perturbation is allowed only in block $A$  of $S(z)$. Similarly for other cases, we denote the backward error by $\eta^{\mathbb{S}}(\lambda,B),\eta^{\mathbb{S}}(\lambda,C)$ and $\eta^{\mathbb{S}}(\lambda,P)$. 

We note that unlike $\eta^{\mathbb{S}}(\lambda,A,B,C,P)$, the eigenvalue backward error $\eta^{\mathbb{S}}(\lambda,A)$ is not necessarily finite. For example, the Rosenbrock system matrix
\begin{equation}
	\tilde S(z)=\mat{c|ccc}a-z & 0 & 0 & 1\\ \hline 1 & 0 & 0 & 0 \\ 0 & 1 & 0 & 0 \\ 0 & 0 & 1 & 0 \rix ,\quad \text{where}~ a\in \C\setminus \{0\}.
\end{equation}
Then $\eta^{\mathbb{S}}(\lambda,A)=\infty$ for every $\lambda\in \C$, i.e., for any given $\lambda \in \C$, there does not exist $\Delta \tilde S(\lambda) \in \mathbb S(A)$ such that ${\det}(\tilde S(\lambda)-\Delta \tilde S(\lambda))=0$.
The following result obtains a computable formula for $\eta^{\mathbb{S}}(\lambda,A)$ when $\eta^{\mathbb{S}}(\lambda,A)$ is finite.
\begin{theorem}\label{thm:perA}
Consider the Rosenbrock system matrix $S(z)$ of the form~\eqref{mat:system} and let $\lambda\in \C$ be such that $(S(\lambda))^{-1}$ exists. Suppose that $\eta^{\mathbb{S}}(\lambda,A)< \infty$, then
\begin{equation}
\eta^{\mathbb{S}}(\lambda,A)=\frac{1}{\sigma_{\max}\left([I_r\; 0_{r,n}](S(\lambda))^{-1}[I_r\; 0_{r,n}]^T \right)}.
\end{equation} 
\end{theorem}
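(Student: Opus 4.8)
The plan is to specialize the reduction used in the proof of Theorem~\ref{theorem:ABCP} to the case where only the block $A$ is perturbed, and then observe that the resulting $\mu$-value problem collapses to an ordinary spectral-radius (hence singular-value) computation because the perturbation set has only one block. First I would write $\Delta S(z)=\mat{cc}\Delta_A & 0\\ 0&0\rix$ for an arbitrary element of $\mathbb S(A)$, and factor this as $\Delta S(\lambda)=\mat{c}I_r\\0_{n,r}\rix \Delta_A \mat{cc}I_r & 0_{r,n}\rix$. Then, exactly as in the proof of Theorem~\ref{theorem:ABCP}, the condition $\det(S(\lambda)-\Delta S(\lambda))=0$ is equivalent, via the identity $\det(I-XY)=\det(I-YX)$, to
\begin{equation*}
\det\Big(I_r-\Delta_A\,[I_r\; 0_{r,n}](S(\lambda))^{-1}[I_r\; 0_{r,n}]^T\Big)=0,
\end{equation*}
and $\nrm{\Delta S(\lambda)}_\infty=\|\Delta_A\|=\sigma_{\max}(\Delta_A)$. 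Setting $N:=[I_r\; 0_{r,n}](S(\lambda))^{-1}[I_r\; 0_{r,n}]^T\in\C^{r,r}$, this gives
\begin{equation*}
\eta^{\mathbb S}(\lambda,A)=\inf\{\sigma_{\max}(\Delta_A)\; :\; \Delta_A\in\C^{r,r},\ \det(I_r-\Delta_A N)=0\}=(\mu_{\C^{r,r}}(N))^{-1},
\end{equation*}
i.e. the structured $\mu$-value with a \emph{single} full (square) block.

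The second step is to evaluate $\mu_{\C^{r,r}}(N)$ for a single unstructured square block. This is the classical fact that $\mu$ over the full matrix set equals the spectral norm: one shows $\min\{\sigma_{\max}(\Delta):\det(I-\Delta N)=0\}=1/\sigma_{\max}(N)$ whenever such a $\Delta$ exists. For the lower bound, $\det(I-\Delta N)=0$ forces $1\in\sigma(\Delta N)$, so $1\le\rho(\Delta N)\le\sigma_{\max}(\Delta)\sigma_{\max}(N)$, giving $\sigma_{\max}(\Delta)\ge 1/\sigma_{\max}(N)$. For the matching upper bound, take a singular-value decomposition $N=U\Sigma V^*$ with $\sigma_{\max}(N)$ in the $(1,1)$ entry, and set $\Delta=\frac{1}{\sigma_{\max}(N)}\,v_1 u_1^*$ (a rank-one matrix, $v_1,u_1$ the leading right/left singular vectors); then $\Delta N$ has eigenvalue $1$ and $\sigma_{\max}(\Delta)=1/\sigma_{\max}(N)$. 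Combining the two steps yields $\eta^{\mathbb S}(\lambda,A)=1/\sigma_{\max}(N)$, which is the claimed formula. I would also remark that the hypothesis $\eta^{\mathbb S}(\lambda,A)<\infty$ is exactly what guarantees $N\ne 0$ (equivalently $\mu_{\C^{r,r}}(N)\ne 0$), so the reciprocal is well defined; conversely if $N=0$ no admissible perturbation exists and the backward error is infinite.

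The only mildly delicate point — the main obstacle, such as it is — is bookkeeping the factorization $\Delta S(\lambda)=P\,\Delta_A\,Q$ with $P=[I_r\;0]^T$, $Q=[I_r\;0]$ and correctly pushing it through $\det(I-XY)=\det(I-YX)$ together with the inverse $(S(\lambda))^{-1}$, so that the compression $Q(S(\lambda))^{-1}P$ appears in the right order; this is a direct specialization of the manipulation already carried out in Theorem~\ref{theorem:ABCP} (with the single block $\Delta_A$ in place of the block-diagonal $\Delta$), so no new idea is needed. Everything else is the standard single-block $\mu$-value identity.
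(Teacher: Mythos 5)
Your proposal is correct and follows essentially the same route as the paper: the same factorization $\Delta S(\lambda)=[I_r\;0_{r,n}]^T\Delta_A\,[I_r\;0_{r,n}]$ and the same reduction via $\det(I-XY)=\det(I-YX)$ to the compressed matrix $H=[I_r\;0_{r,n}](S(\lambda))^{-1}[I_r\;0_{r,n}]^T$, together with the correct observation that $\eta^{\mathbb S}(\lambda,A)<\infty$ is equivalent to $H\neq 0$. The only cosmetic difference is the final step: the paper minimizes $\|w\|/\|Hw\|$ over vectors $w$ using the minimal-norm solution of $\Delta_A Hw=w$, whereas you invoke the single-full-block identity $\mu_{\C^{r,r}}(H)=\sigma_{\max}(H)$ with an explicit rank-one witness $\Delta_A=\sigma_{\max}(H)^{-1}v_1u_1^*$ — these two arguments are equivalent and both yield the stated formula.
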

\begin{proof}
From~\eqref{eq:error_delA}, we have
\begin{equation}\label{error:A2}
\eta^{\mathbb{S}}(\lambda,A)=\inf\left\{ {\nrm{\Delta S(\lambda)}}_\infty\; : \; \Delta S(z)\in \mathbb{S}(A),\; {\rm det}(S(\lambda)-\Delta S(\lambda))=0 \right\}.
\end{equation}
For any $\Delta S(z)=\mat{cc}\Delta_A & 0\\0& 0\rix \in \mathbb S(A)$, the
determinant condition in~\eqref{error:A2} can be equivalently written as,
\begin{align*}\label{det:A}
{\det}(S(\lambda)-\mat{cc}\Delta_A & 0\\ 0& 0 \rix)=0 & \iff {\det}(S(\lambda) - [I_r\; 0_{r,n}]^T \Delta_A [I_r\; 0_{r,n}])=0 \\
& \iff {\det}(I_r - \Delta_A [I_r\; 0_{r,n}]S(\lambda)^{-1}[I_r\; 0_{r,n}]^T)=0,
\end{align*}
%
 which is equivalent to the condition that there exists $w\in \C^{r}\setminus \{0\}$ such that 
\begin{equation}\label{eq:perAdet}
\Delta_A [I_r\; 0_{r,n}](S(\lambda))^{-1}[I_r\; 0_{r,n}]^T w=w.
\end{equation} 
By setting $ H:=[I_r\; 0_{r,n}](S(\lambda))^{-1}[I_r\; 0_{r,n}]^T$ and using~\eqref{eq:perAdet} for the determinant condition in~\eqref{error:A2}, we obtain
%
\begin{eqnarray}\label{error:perA3}
\eta^{\mathbb{S}}(\lambda,A)&=&\inf\left\{\|\Delta_A\| \; : \;\Delta_A \in \C^{r,r},\;  w\in \C^r\setminus \{0\},\; \Delta_A H w=w \right\} \nonumber\\
&=& \inf\left\{\|\Delta_A\| \; : \;\Delta_A \in \C^{r,r},\;  w\in \C^r,\; Hw\neq 0,\;  \Delta_A H w=w \right\}.
\end{eqnarray}
Since $\eta^{\mathbb{S}}(\lambda,A)$ is assumed to be finite, we have that $\text{null}(H) \neq \C^r$ and thus for any $w \in \C^r$ such that $Hw \neq 0$, 
from~\cite{trenkler2004matrices}, there always exists $\Delta_A \in \C^{r,r}$ satisfying~\eqref{eq:perAdet}. The minimal spectral norm of such a $\Delta_A$ satisfies that 
\begin{equation}\label{norm:perA}
	\|\Delta_A\|=\frac{\|w\|}{\|H w\|}.
\end{equation}
Using~\eqref{norm:perA} in~\eqref{error:perA3}, we obtain,
\begin{eqnarray}\label{eq:perAsig}
\eta^{\mathbb{S}}(\lambda,A)&=&\inf \left\{\frac{\|w\|}{\|H w\|}\; : \; w\in \C^r ,\;H w \neq 0 \right\} \nonumber\\
&=&\inf \left\{\frac{\|w\|}{\|H w\|}\; : \; w\in \C^r\setminus\{0\} \right\} \\
&=& \frac{1}{\sigma_{\max}(H )},\nonumber
\end{eqnarray}
where in~\eqref{eq:perAsig} the condition $H w \neq 0$ was dropped because of the assumption that $\eta^{\mathbb{S}}(\lambda,A)<\infty$ as the infimum in~\eqref{eq:perAsig} will not be attained at vectors $w$ for which $Hw=0$. Thus including vectors $w$ for which $H w = 0$ will not affect the infimum in~\eqref{eq:perAsig}. This completes the proof.
\end{proof}
Similar to $\eta^{\mathbb{S}}(\lambda,A)$, we can define the eigenvalue backward error of $S(z)$ for a given $\lambda \in \C$ with respect to perturbations only to block $B$ or $C$ or $P(z)$ by restricting the perturbation sets to $\mathbb{S}(B):=\mathbb{S}(0,B,0,0)$ or $\mathbb{S}(C):=\mathbb{S}(0,0,C,0)$ or $\mathbb{S}(P):=\mathbb{S}(0,0,0,P)$, respectively. The corresponding backward errors are respectively denoted by $\eta^{\mathbb{S}}(\lambda,B)$, $\eta^{\mathbb{S}}(\lambda,C)$ and $\eta^{\mathbb{S}}(\lambda,P)$,  and obtained in the following result.  
\begin{theorem}\label{thm:oneperturbothercases}
Consider the Rosenbrock system matrix $S(z)$ of the form~\eqref{mat:system} and let $\lambda\in \C$ be such that $(S(\lambda))^{-1}$ exists.
\begin{enumerate}
\item[(i)] If $\eta^{\mathbb{S}}(\lambda,B)<\infty$, then we have
\begin{equation}
\eta^{\mathbb{S}}(\lambda,B)=\frac{1}{\sigma_{\max}([0_{n,r}\; I_n]{(S(\lambda))}^{-1}[I_r\; 0_{r,n}]^T)};
\end{equation}
\item[(ii)] if $\eta^{\mathbb{S}}(\lambda,C)<\infty$, then we have
\begin{equation}
\eta^{\mathbb{S}}(\lambda,C)=\frac{1}{\sigma_{\max}([I_r\; 0_{r,n}]{(S(\lambda))}^{-1}[0_{n,r}\; I_n]^T)};
\end{equation}
\item[(iii)] if $\eta^{\mathbb{S}}(\lambda,P)<\infty$, then define $J_1:={\diag}(\mat{c}0_{r,n}\\I_n \rix,\tilde{J_1})$ and $J_2:=\mat{c}\begin{array}{cc}0_{n,r}&I_n\end{array} \\ \hline \tilde{J_2}\rix$, where $\tilde{J_1} \text{ and } \tilde{J_2}$ are defined by~\eqref{mat:tildeJ1J2}. Then we have
\begin{equation}
\eta^{\mathbb{S}}(\lambda,P)=\left(\mu_{\mathcal{S}}(M)\right)^{-1},
\end{equation}
where $\mu_{\mathcal{S}}(M)$ is defined by~\eqref{def:mu1}  for the matrix 
 $M:=J_2{(S(\lambda))}^{-1} [I_{r+n}\; \lambda I_{r+n}\; \cdots \;\lambda^{d}I_{r+n}]J_1$
  and  $\mathcal{S}=\{{\diag}(\Delta_1,\ldots,\Delta_{d+1})\; : \; \Delta_i\in \C^{n,n}, i=1,2,\ldots,d+1\}$. 
\end{enumerate}
\end{theorem}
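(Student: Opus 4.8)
The three formulas follow by transcribing the two arguments already developed: parts~(i) and~(ii) mirror the proof of Theorem~\ref{thm:perA}, while part~(iii) mirrors the proof of Theorem~\ref{theorem:ABCP}.

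For part~(i), I would write any $\Delta S(z)\in\mathbb S(B):=\mathbb S(0,B,0,0)$ as $\Delta S(\lambda)=[I_r\;0_{r,n}]^T\,\Delta_B\,[0_{n,r}\;I_n]$ with $\Delta_B\in\C^{r,n}$, so that by the identity $\det(X-UV)=\det(X)\det(I-VX^{-1}U)$ (valid since $S(\lambda)$ is invertible) followed by Sylvester's determinant identity $\det(I_p-YZ)=\det(I_q-ZY)$, the constraint $\det(S(\lambda)-\Delta S(\lambda))=0$ becomes $\det(I_r-\Delta_B H)=0$ with $H:=[0_{n,r}\;I_n](S(\lambda))^{-1}[I_r\;0_{r,n}]^T\in\C^{n,r}$. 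As in Theorem~\ref{thm:perA}, this holds if and only if there is $w\in\C^r\setminus\{0\}$ with $\Delta_B(Hw)=w$, which forces $Hw\neq0$, and the least spectral norm of a (rectangular) $\Delta_B\in\C^{r,n}$ mapping the nonzero vector $Hw$ to $w$ equals $\|w\|/\|Hw\|$, attained by $\Delta_B=w(Hw)^*/\|Hw\|^2$; the construction of~\cite{trenkler2004matrices} does not require square matrices. Taking the infimum over $w$ and discarding the constraint $Hw\neq 0$ under the finiteness hypothesis, exactly as in~\eqref{eq:perAsig}, yields $\eta^{\mathbb S}(\lambda,B)=1/\sigma_{\max}(H)$, which is the stated formula. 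Part~(ii) is handled identically, starting from $\Delta S(\lambda)=[0_{n,r}\;I_n]^T\Delta_C[I_r\;0_{r,n}]$ with $\Delta_C\in\C^{n,r}$; applying the two determinant identities in the appropriate order produces $\det(I_n-\Delta_C H)=0$ with $H:=[I_r\;0_{r,n}](S(\lambda))^{-1}[0_{n,r}\;I_n]^T$, and the same interpolation argument gives $\eta^{\mathbb S}(\lambda,C)=1/\sigma_{\max}(H)$.

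For part~(iii), I would follow the reduction in the proof of Theorem~\ref{theorem:ABCP} closely, now with only the blocks $\Delta_{A_0},\ldots,\Delta_{A_d}$ perturbed. Placing $\Delta_P(\lambda)=\sum_{j=0}^d\lambda^j\Delta_{A_j}$ in the $(2,2)$-block of $\Delta S(\lambda)$ and embedding it through the selection row $[I_{r+n}\;\lambda I_{r+n}\;\cdots\;\lambda^dI_{r+n}]$, the determinant condition becomes $\det\bigl(S(\lambda)-[I_{r+n}\;\lambda I_{r+n}\;\cdots\;\lambda^dI_{r+n}]\,J_1\,\Delta\,J_2\bigr)=0$ with $\Delta={\diag}(\Delta_{A_0},\ldots,\Delta_{A_d})$ and $J_1,J_2$ as in the statement; the only thing to verify is the block bookkeeping, namely that $J_1\Delta J_2$ reproduces the block column whose $j$-th block is $\mat{cc}0_{r,r}&0_{r,n}\\0_{n,r}&\Delta_{A_j}\rix$ for $j=0,\ldots,d$, which is immediate from~\eqref{mat:tildeJ1J2} and the definitions of $J_1$ and $J_2$. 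Passing to $\det(I_{(d+1)n}-\Delta M)=0$ with $M=J_2(S(\lambda))^{-1}[I_{r+n}\;\cdots\;\lambda^dI_{r+n}]J_1$ and noting ${\nrm{\Delta S(z)}}_\infty=\max_j\|\Delta_{A_j}\|=\sigma_{\max}(\Delta)$, one reads off $\eta^{\mathbb S}(\lambda,P)=\inf\{\sigma_{\max}(\Delta):\Delta\in\mathcal S,\ \det(I-\Delta M)=0\}=(\mu_{\mathcal S}(M))^{-1}$. No finiteness hypothesis is needed in this case, since $\mu_{\mathcal S}(M)=0$---equivalently $(\mu_{\mathcal S}(M))^{-1}=\infty$---is precisely the situation where no admissible $\Delta S(\lambda)$ makes $\det(S(\lambda)-\Delta S(\lambda))$ vanish.

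The main points needing care are the correct ordering of the two determinant identities in parts~(i)--(ii), so that the matrix $H$ produced matches the one in the statement, together with the observation that the norm-minimal interpolating matrix is allowed to be rectangular; and, in part~(iii), the purely mechanical check that $J_1\Delta J_2$ recovers the block structure of the perturbation. Beyond this the proof is a direct reuse of Theorems~\ref{thm:perA} and~\ref{theorem:ABCP}.
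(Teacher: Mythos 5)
Your proposal is correct and follows exactly the route the paper intends: the paper's own proof is a one-line remark that (i)--(ii) mirror Theorem~\ref{thm:perA} and (iii) mirrors Theorem~\ref{theorem:ABCP}, and you have filled in precisely those details with the right factorizations, the correct matrices $H$, and the correct block bookkeeping for $J_1\Delta J_2$. No gaps.
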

\proof
The proofs of (i) and (ii) follow the proof of Theorem~\ref{thm:perA}, whereas the proof of (iii) is similar to Theorem~\ref{theorem:ABCP}.
\eproof

\subsubsection{Perturbing any two blocks of $S(z)$ at a time}\label{sec:pertwo}

In this section, we consider the eigenvalue backward error of a $\lambda \in \C$ for $S(z)$ while perturbing any two blocks from $A,B,C$, and $P(z)$ of $S(z)$. There are six such cases, i.e., perturbing $\{A \, \text{and}\, B\}$ or $\{A \, \text{and}\, C\}$ or $\{A \, \text{and}\, P(z)\}$ or $\{B \, \text{and}\, C\}$ or $\{B \, \text{and}\, P(z)\}$, or $\{C \, \text{and}\, P(z)\}$. Here, we give the details only for the case of perturbing blocks $\{A \, \text{and}\, B\}$ of $S(z)$ at a time. 
The results for the other cases of perturbing any two blocks follow similarly. 

In view of~\eqref{def:pertset}, the perturbation set consisting of perturbation matrices $\Delta S(z)$ with respect to perturbations only to $A$ and $B$ is denoted by $\mathbb{S}(A,B):=\mathbb{S}(A,B,0,0)$. The corresponding eigenvalue backward error for a given scalar $\lambda \in \C$ is denoted by $\eta^{\mathbb{S}}(\lambda,A,B)$ and is defined by~\eqref{def:error} when the perturbations are restricted to the set $\mathbb{S}(A,B)$, i.e., 
\begin{equation}\label{eq:error_delA_B}
	\eta^{\mathbb{S}}(\lambda,A,B):=\inf\left\{ {\nrm{\Delta S(\lambda)}}_\infty\; : \; \Delta S(z)\in \mathbb{S}(A,B),\; {\rm det}(S(\lambda)-\Delta S(\lambda))=0 \right\}.
\end{equation}
We note that unlike $\eta^{\mathbb{S}}(\lambda,A)$ or $\eta^{\mathbb{S}}(\lambda,B)$ of the previous section, the eigenvalue backward error $\eta^{\mathbb{S}}(\lambda,A,B)$ is always finite, since 
perturbations $\Delta_A=A-\lambda I_r$ and $\Delta_B=B$ make the perturbed $S(\lambda)-\Delta S(\lambda)$ singular. 
Thus, we have
\begin{equation*}
	\eta^{\mathbb{S}}(\lambda,A,B)\leq \max\{\|A\|,\|B\|\}< \infty.
\end{equation*} 
The other backward errors, i.e., $\eta^{\mathbb{S}}(\lambda,A,C)$, $\eta^{\mathbb{S}}(\lambda,A,P)$, $\eta^{\mathbb{S}}(\lambda,B,C)$, $\eta^{\mathbb{S}}(\lambda,B,P)$, and $\eta^{\mathbb{S}}(\lambda,C,P)$ are defined analogously. 
\begin{theorem}\label{thm:perAB}
Consider the Rosenbrock system matrix $S(z)$ of the form~\eqref{mat:system} and let $\lambda\in \C$ be such that $(S(\lambda))^{-1}$ exists. Then the eigenvalue backward error while considering perturbation to blocks $A$ and $B$ of $S(z)$ is given by
\begin{equation}
\eta^{\mathbb{S}}(\lambda,A,B)=\left(\mu_{\mathcal{S}}\left(M\right) \right)^{-1},
\end{equation}
where $\mu_{\mathcal{S}}(M)$ is defined by~\eqref{def:mu} for 
$\mathcal{S}=\left\{{\diag}(\Delta_1,\Delta_2)\: :\; \Delta_1\in\C^{r,r}, \;\Delta_2\in \C^{r,n} \right\}$ and the matrix $M=(S(\lambda))^{-1}\mat{cc}I_r & I_{r}\\ 0_{n,r} & 0_{n,r} \rix$.
\end{theorem}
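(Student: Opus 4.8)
The plan is to mirror the reduction carried out in Theorem~\ref{theorem:ABCP}, observing that the absence of the polynomial block $P(z)$ makes the bookkeeping much lighter: no powers of $\lambda$ and no auxiliary matrices $\tilde J_1,\tilde J_2$ are needed. Starting from~\eqref{eq:error_delA_B}, a generic perturbation in $\mathbb S(A,B)$ reads $\Delta S(\lambda)=\mat{cc}\Delta_A & \Delta_B\\ 0_{n,r} & 0_{n,n}\rix$ with $\Delta_A\in\C^{r,r}$ and $\Delta_B\in\C^{r,n}$, and the key algebraic step is the factorization
\[
\mat{cc}\Delta_A & \Delta_B\\ 0_{n,r} & 0_{n,n}\rix=\mat{cc}I_r & I_r\\ 0_{n,r} & 0_{n,r}\rix\diag(\Delta_A,\Delta_B),
\]
which writes the perturbation as $L\Delta$, where $L:=\mat{cc}I_r & I_r\\ 0_{n,r} & 0_{n,r}\rix\in\C^{r+n,2r}$ and $\Delta:=\diag(\Delta_A,\Delta_B)\in\mathcal S\subseteq\C^{2r,r+n}$.

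Using that $(S(\lambda))^{-1}$ exists, the singularity condition $\det(S(\lambda)-\Delta S(\lambda))=0$ is equivalent to $\det\big(I_{r+n}-(S(\lambda))^{-1}L\Delta\big)=0$, and an application of the Sylvester determinant identity $\det(I_m-XY)=\det(I_k-YX)$ with $X=(S(\lambda))^{-1}L\in\C^{r+n,2r}$ and $Y=\Delta\in\C^{2r,r+n}$ turns this into $\det(I_{2r}-\Delta M)=0$, where $M=(S(\lambda))^{-1}L=(S(\lambda))^{-1}\mat{cc}I_r & I_r\\ 0_{n,r} & 0_{n,r}\rix$ is exactly the matrix in the statement. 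On the norm side, since the spectral norm of a (possibly rectangular) block-diagonal matrix equals the maximum of the spectral norms of its blocks, one has ${\nrm{\Delta S(\lambda)}}_\infty=\max\{\|\Delta_A\|,\|\Delta_B\|\}=\sigma_{\max}\big(\diag(\Delta_A,\Delta_B)\big)=\|\Delta\|$. Substituting both observations into~\eqref{eq:error_delA_B} gives
\[
\eta^{\mathbb S}(\lambda,A,B)=\inf\{\sigma_{\max}(\Delta)\;:\;\Delta\in\mathcal S,\ \det(I_{2r}-\Delta M)=0\},
\]
which by the defining relation~\eqref{def:mu1} is precisely $(\mu_{\mathcal S}(M))^{-1}$; as recalled just before the theorem, $\eta^{\mathbb S}(\lambda,A,B)$ is finite, so this identification is not vacuous.

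I do not expect a genuine obstacle here. The two points that need a little care are: (a) getting the rectangular block-diagonal factorization right, so that $\diag(\Delta_A,\Delta_B)$ itself --- and not a permuted or padded variant --- is the free parameter, which is what makes the admissible set come out as the stated rectangular-block-diagonal class with blocks in $\C^{r,r}$ and $\C^{r,n}$; and (b) invoking $\det(I-XY)=\det(I-YX)$ with the correct non-square sizes, since it is this step that replaces $I_{r+n}$ by $I_{2r}$ and thereby fixes the ambient space $\C^{2r,r+n}$ of admissible perturbations in the $\mu$-value problem.
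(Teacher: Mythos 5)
Your proposal is correct and follows essentially the same route as the paper's proof: the same factorization $\Delta S(\lambda)=\mat{cc}I_r & I_r\\ 0_{n,r} & 0_{n,r}\rix\diag(\Delta_A,\Delta_B)$, the same determinant manipulation yielding $\det(I_{2r}-\Delta M)=0$, and the same identification of $\max\{\|\Delta_A\|,\|\Delta_B\|\}$ with $\sigma_{\max}(\Delta)$ before reading off the $\mu$-value. The only cosmetic difference is that you name the Sylvester determinant identity explicitly where the paper simply asserts the equivalence.
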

\begin{proof}
	In view of~\eqref{eq:error_delA_B}, for any $\Delta S(z)=\mat{cc}\Delta_A & \Delta_B \\ 0 & 0\rix$ where $\Delta_A \in \C^{r,r}$ and $\Delta_B \in \C^{r,n}$, we have
\begin{align*}
{\det}(S(\lambda)-\Delta S(\lambda))=0 & \iff {\det}\left(S(\lambda)-\mat{cc}I_r & I_r \\ 0_{n,r} & 0_{n,r} \rix \mat{cc}\Delta_A & \\ & \Delta_B\rix\right)=0,
\end{align*} 
which is equivalent to 
\begin{equation}\label{det:AB}
{\det}\left(I_{2r} - \mat{cc}\Delta_A & \\ & \Delta_B \rix (S(\lambda))^{-1}\mat{cc}I_r & I_r \\ 0_{n,r} & 0_{n,r} \rix \right)=0.
\end{equation}
Using~\eqref{det:AB} for the determinant condition in~\eqref{eq:error_delA_B}, we have
\begin{eqnarray*}
\eta^{\mathbb{S}}(\lambda,A,B)&=&\inf \Big\{\|\Delta\|\; : \;\Delta_A \in \C^{r,r},\;\Delta_B \in \C^{r,n},\; \Delta ={\diag} (\Delta_A,\Delta_B), \; \\
&& \hspace {2cm}{\det}(I_{2r}-\Delta (S(\lambda))^{-1}\mat{cc}I_r & I_r \\ 0_{n,r} & 0_{n,r} \rix )=0 \Big\}\\
&=&\mu_{\mathcal{S}}\Big((S(\lambda))^{-1}\mat{cc}I_r & I_r \\ 0_{n,r} & 0_{n,r} \rix\Big),
\end{eqnarray*}
where $\mathcal{S}=\left\{{\diag}(\Delta_1,\Delta_2)\: :\; \Delta_1\in\C^{r,r}, \;\Delta_2\in \C^{r,n} \right\}$. This completes the proof.
\end{proof}

In the follwoing, we state a result for the backward errors
$\eta^{\mathbb{S}}(\lambda,A,C)$, $\eta^{\mathbb{S}}(\lambda,A,P)$, $\eta^{\mathbb{S}}(\lambda,B,C)$, $\eta^{\mathbb{S}}(\lambda,B,P)$, and $\eta^{\mathbb{S}}(\lambda,C,P)$ proof of which is analogous to Theorem~\ref{thm:perAB} and hence omitted. 

\begin{theorem}\label{theorem:two}
Consider the Rosenbrock system matrix $S(z)$ of the form~\eqref{mat:system} and let $\lambda\in \C$ be such that $(S(\lambda))^{-1}$ exists. Let $\tilde{J_1}$ and $\tilde{J_2}$ be as defined in~\eqref{mat:tildeJ1J2}. Then,
\begin{enumerate}
\item perturbing both $A$ and $C$, i.e. when the perturbation set is $\mathbb S (A,C):=\mathbb S(A,0,C,0)$ in~\eqref{def:error}, we have
\begin{equation}
\eta^{\mathbb{S}}(\lambda,A,C)=\Big(\mu_{\mathcal{S}}\big(\mat{cc}I_r & 0_{r,n}\\I_r & 0_{r,n}\rix (S(\lambda))^{-1} \big)\Big)^{-1},
\end{equation}
where $\mathcal{S}=\{{\diag}(\Delta_1,\Delta_2)\;:\; \Delta_1\in\C^{r,r}, \; \Delta_2\in\C^{n,r}\}$.
\item perturbing both $A$ and $P(z)$, i.e. when the perturbation set is $\mathbb S (A,P):=\mathbb S(A,0,0,P)$ in~\eqref{def:error}, we have
\begin{equation}
\eta^{\mathbb{S}}(\lambda,A,P)=\left(\mu_{\mathcal{S}}(J_2(S(\lambda))^{-1} [I_{r+n}\; \lambda I_{r+n}\; \cdots \;\lambda^{d}I_{r+n}]J_1)\right)^{-1},
\end{equation}
where $\mathcal{S}=\{{\diag}(\Delta_1,\Delta_2,\ldots,\Delta_{d+2})\; : \; \Delta_1\in \C^{r,r},\;\Delta_i\in \C^{n,n}, i=2,\ldots,d+2\}$, and $J_1\in \C^{(d+1)(r+n),(d+1)n+r}$ and $J_2\in \C^{(d+1)n+r,(r+n)}$ are defined as
\begin{equation}
J_1=\mat{cc|c} I_r && \\&I_n&\\ \hline &&\tilde{J_1}\rix \quad \text{and}\quad  J_2=\mat{c}\begin{array}{cc}I_r& \\ & I_n \end{array}\\ \hline \tilde{J_2} \rix.
\end{equation}
\item perturbing both $B$ and $C$, i.e. when the perturbation set is $\mathbb S (B,C):=\mathbb S(0,B,C,0)$ in~\eqref{def:error}, we have
\begin{equation}
\eta^{\mathbb{S}}(\lambda,B,C)=\left( \mu_{\mathcal{S}}\left(\mat{cc} &I_n\\I_r& \rix (S(\lambda))^{-1}\right) \right)^{-1},
\end{equation}
where $\mathcal{S}=\{{\diag}(\Delta_1,\Delta_2)\;:\; \Delta_1\in\C^{r,n}, \; \Delta_2\in\C^{n,r}\}$.
 \item perturbing both $B$ and $P(z)$, i.e. when the perturbation set is $\mathbb S (
 B,P):=\mathbb S(0,B,0,P)$ in~\eqref{def:error}, we have
\begin{equation}
\eta^{\mathbb{S}}(\lambda,B,P)=\left(\mu_{\mathcal{S}}(J_2(S(\lambda))^{-1} [I_{r+n}\; \lambda I_{r+n}\; \cdots \;\lambda^{d}I_{r+n}]J_1)\right)^{-1},
\end{equation}
where $\mathcal{S}=\{{\diag}(\Delta_1,\Delta_2,\ldots,\Delta_{d+2})\; : \; \Delta_1\in\C^{r,n},\;\Delta_i\in \C^{n,n}, i=2,3,\ldots,d+2\}$ and $J_1\in \C^{(d+1)(r+n),(d+1)n+r}$ and $J_2\in \C^{(d+2)n,(r+n)}$ are defined as
\begin{equation}
J_1=\mat{cc|c} I_r && \\&I_n&\\ \hline &&\tilde{J_1}\rix, \quad J_2=\mat{c}\begin{array}{cc}0_{n,r}&I_n \\0_{n,r} & I_n \end{array}\\ \hline \tilde{J_2} \rix.
\end{equation}
\item perturbing both $C$ and $P(z)$, i.e. when the perturbation set is $\mathbb S (C,P):=\mathbb S(0,0,C,P)$ in~\eqref{def:error}, we have
\begin{equation}
\eta^{\mathbb{S}}(\lambda,C,P)=\left(\mu_{\mathcal{S}}(J_2(S(\lambda))^{-1} [I_{r+n}\; \lambda I_{r+n}\; \cdots \;\lambda^{d}I_{r+n}]J_1)\right)^{-1},
\end{equation}
where $\mathcal{S}=\{{\diag}(\Delta_1,\Delta_2,\ldots,\Delta_{d+2})\; : \; \Delta_1\in\C^{n,r},\;\Delta_i\in \C^{n,n}, i=2,3,\ldots,d+2\}$ and $J_1\in \C^{(d+1)(r+n),(d+2)n}$ and $J_2\in \C^{(d+1)n+r,(r+n)}$ are defined as
\begin{equation}
J_1=\mat{cc|c} 0_{r,n} &0_{r,n}& \\I_n&I_n&\\ \hline &&\tilde{J_1}\rix \quad \text{and} \quad J_2=\mat{c}\begin{array}{cc}I_r& \\ & I_n \end{array}\\ \hline \tilde{J_2} \rix.
\end{equation}
\end{enumerate}
\end{theorem}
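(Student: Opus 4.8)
The plan is to treat all five pairs exactly as in the proofs of Theorem~\ref{thm:perAB} and Theorem~\ref{theorem:ABCP}, via a two-step reduction: (i) rewrite the singularity condition $\det(S(\lambda)-\Delta S(\lambda))=0$ as $\det(I-\Delta M)=0$, where $M$ is a fixed matrix and $\Delta$ is the block-diagonal matrix collecting the free perturbation blocks; and (ii) read off $\eta^{\mathbb S}(\lambda,\cdot)=(\mu_{\mathcal S}(M))^{-1}$ from~\eqref{def:mu1}.

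The only case-dependent part is the factorization of $\Delta S(\lambda)$. For the two pairs not involving $P(z)$, namely $\{A,C\}$ and $\{B,C\}$, one has $\Delta S(\lambda)=\Delta R$ with $\Delta=\diag(\Delta_A,\Delta_C)$, resp.\ $\Delta=\diag(\Delta_B,\Delta_C)$, and $R$ the constant matrix $\mat{cc}I_r & 0_{r,n}\\ I_r & 0_{r,n}\rix$, resp.\ $\mat{cc}0_{n,r} & I_n\\ I_r & 0_{r,n}\rix$, which merely reinserts the two perturbed blocks into their correct positions in $S(\lambda)$; then $\det(S(\lambda)-\Delta R)=0$ is equivalent to $\det(I-\Delta R(S(\lambda))^{-1})=0$, so $M=R(S(\lambda))^{-1}$, exactly as in the statement. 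For the three pairs involving $P(z)$, namely $\{A,P\}$, $\{B,P\}$ and $\{C,P\}$, one proceeds verbatim as in Theorem~\ref{theorem:ABCP}: expand $\Delta S(\lambda)=[I_{r+n}\ \lambda I_{r+n}\ \cdots\ \lambda^{d}I_{r+n}]\,J_{1}\,\Delta\,J_{2}$, where $\Delta=\diag(\Delta_{\ast},\Delta_{A_{0}},\ldots,\Delta_{A_{d}})$ with $\Delta_{\ast}\in\{\Delta_A,\Delta_B,\Delta_C\}$ the single perturbed off-diagonal block, the matrices $\tilde J_{1},\tilde J_{2}$ from~\eqref{mat:tildeJ1J2} route the repeated blocks $\Delta_{A_{1}},\ldots,\Delta_{A_{d}}$, and the leading constant rows and columns of $J_{1},J_{2}$ place $\Delta_{\ast}$ and $\Delta_{A_{0}}$ correctly; a direct block multiplication verifies this identity for the specific $J_{1},J_{2}$ printed in the statement, and a dimension count confirms the stated sizes. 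A further application of the Schur complement identity then yields $\det(I-\Delta M)=0$ with $M=J_{2}(S(\lambda))^{-1}[I_{r+n}\ \lambda I_{r+n}\ \cdots\ \lambda^{d}I_{r+n}]J_{1}$.

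The closing step is uniform across all cases: since $\Delta$ is block diagonal, $\nrm{\Delta S(\lambda)}_{\infty}=\max_{i}\|\Delta_{i}\|=\|\Delta\|=\sigma_{\max}(\Delta)$, and the assignment $\Delta S(z)\mapsto\Delta$ is a bijection between the restricted perturbation set and the structure class $\mathcal S$; hence the corresponding eigenvalue backward error equals $\inf\{\sigma_{\max}(\Delta)\,:\,\Delta\in\mathcal S,\ \det(I-\Delta M)=0\}=(\mu_{\mathcal S}(M))^{-1}$, with the convention $1/0=\infty$ covering the pairs (for instance $\{A,C\}$) whose backward error need not be finite. The only point demanding care — and the likeliest source of error — is the verification of the factorization $\Delta S(\lambda)=[\,\cdot\,]\,J_{1}\Delta J_{2}$ (or $\Delta S(\lambda)=\Delta R$) with the exact $J_{1},J_{2}$ given in the theorem: one must keep the zero blocks in the right positions and the block partitions compatible. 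This, however, is purely mechanical and reproduces, mutatis mutandis, the computations in the proofs of Theorem~\ref{thm:perAB} (for $\{A,C\}$ and $\{B,C\}$) and of Theorem~\ref{theorem:ABCP} (for $\{A,P\}$, $\{B,P\}$ and $\{C,P\}$).
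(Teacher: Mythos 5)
Your proposal is correct and follows exactly the route the paper intends: the paper omits this proof as ``analogous to Theorem~\ref{thm:perAB}'', and you correctly split the five cases into the two patterns, writing $\Delta S(\lambda)=\Delta R$ with the constant factor on the right (the mirror image of the $\{A,B\}$ factorization $\Delta S(\lambda)=R\,\Delta$) for the pairs $\{A,C\}$ and $\{B,C\}$, and reusing the $J_1\Delta J_2$ construction of Theorem~\ref{theorem:ABCP} for the pairs involving $P(z)$. The factorizations you describe reproduce the stated matrices $M$ and structure classes $\mathcal S$ in all five cases, so nothing further is needed.
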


\subsubsection{Perturbing any three blocks of $S(z)$ at a time}\label{sec:perthree}

In this section, we investigate the eigenvalue backward error of a $\lambda \in \C$ for $S(z)$ while perturbing any three blocks from $A$, $B$, $C$ and $P(z)$ of $S(z)$. There are four such cases, i.e., perturbing $\{A,B,C\}$ or  $\{A,B,P(z)\}$ or  $\{A,C, P(z)\}$, or  $\{B,C,P(z)\}$. 
Similar to Sections~\ref{sec:perone} and~\ref{sec:pertwo}, by constraining the perturbation set~\eqref{def:pertset} appropriately,~\eqref{def:error} defines the relevant backward errors. 
For example, the perturbation set consisting of perturbation matrices $\Delta S(z)$ with respect to perturbations only to $A$, $B$, and $C$ is denoted by $\mathbb{S}(A,B,C):=\mathbb{S}(A,B,C,0)$. The corresponding eigenvalue backward error for a given scalar $\lambda \in \C$ is denoted by $\eta^{\mathbb{S}}(\lambda,A,B,C)$ and is defined by~\eqref{def:error} when the perturbations are restricted to the set $\mathbb{S}(A,B,C)$, i.e., 
\begin{equation}\label{error:ABC}
	\eta^{\mathbb{S}}(\lambda,A,B,C):=\inf\left\{ \nrm{\Delta S(\lambda)}\; : \; \Delta S(z)\in \mathbb{S}(A,B,C),\; {\rm det}(S(\lambda)-\Delta S(\lambda))=0 \right\}.
\end{equation}
The other backward errors, i.e., $\eta^{\mathbb{S}}(\lambda,A,B,P)$, $\eta^{\mathbb{S}}(\lambda,A,C,P)$, and $\eta^{\mathbb{S}}(\lambda,B,C,P)$ are defined analogously. The backward error is finite in each of these cases. 
\begin{theorem}\label{thm:per_ABC}
Consider the Rosenbrock system matrix $S(z)$ of the form~\eqref{mat:system} and let $\lambda\in \C$ be such that $(S(\lambda))^{-1}$ exists. Then
\begin{equation*}
\eta^{\mathbb{S}}(\lambda,A,B,C)=\left(\mu_{\mathcal{S}}\left(M\right) \right)^{-1},
\end{equation*}
where $\mu_{\mathcal{S}}(M)$ is defined by~\eqref{def:mu} for 
$\mathcal{S}=\left\{{\diag}(\Delta_1,\Delta_2,\Delta_3)\: :\; \Delta_1\in\C^{r,r}, \;\Delta_2\in \C^{r,n},\; \Delta_3\in \C^{n,r} \right\}$ and 
$M:=\mat{cc}I_r & \\&I_n \\I_r&  \rix (S(\lambda))^{-1}\mat{ccc}I_r & I_r&\\&&I_n \rix$.
\end{theorem}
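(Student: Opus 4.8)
The plan is to follow exactly the template established in the proof of Theorem~\ref{thm:perAB}, simply tracking three blocks $\Delta_A,\Delta_B,\Delta_C$ instead of two. Starting from the definition~\eqref{error:ABC}, I would take an arbitrary $\Delta S(z)=\mat{cc}\Delta_A & \Delta_B\\ \Delta_C & 0\rix$ with $\Delta_A\in\C^{r,r}$, $\Delta_B\in\C^{r,n}$, $\Delta_C\in\C^{n,r}$, and rewrite the perturbation as a product of a fixed selection matrix, the block-diagonal perturbation, and another fixed selection matrix. Concretely, one checks directly that
\[
\mat{cc}\Delta_A & \Delta_B\\ \Delta_C & 0_{n}\rix=\mat{cc}I_r & \\ & I_n\\ I_r& \rix {\diag}(\Delta_A,\Delta_B,\Delta_C)\mat{ccc}I_r & I_r & \\ & & I_n\rix,
\]
which is the identity that makes the whole reduction work. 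The middle factor realizes $\Delta_A$ in the $(1,1)$ block, $\Delta_B$ in the $(1,2)$ block (via the second block-row of the left factor hitting the third block-column of the right factor — wait, one must be careful with the ordering here; I would verify the block bookkeeping so that $\Delta_A$ lands top-left, $\Delta_B$ top-right, $\Delta_C$ bottom-left, and a zero bottom-right).

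Next I would substitute this factorization into $\det(S(\lambda)-\Delta S(\lambda))=0$ and use the assumed invertibility of $S(\lambda)$ together with the Sylvester determinant identity $\det(I-XY)=\det(I-YX)$ to obtain
\[
{\det}\Big(I-{\diag}(\Delta_A,\Delta_B,\Delta_C)\,\mat{ccc}I_r & I_r & \\ & & I_n\rix (S(\lambda))^{-1}\mat{cc}I_r & \\ & I_n\\ I_r& \rix\Big)=0.
\]
Since $\nrm{\Delta S(\lambda)}_\infty=\max\{\|\Delta_A\|,\|\Delta_B\|,\|\Delta_C\|\}=\|{\diag}(\Delta_A,\Delta_B,\Delta_C)\|=\sigma_{\max}({\diag}(\Delta_A,\Delta_B,\Delta_C))$, the infimum in~\eqref{error:ABC} becomes exactly the infimum of $\|\Delta\|$ over $\Delta\in\mathcal S={\diag}(\Delta_1,\Delta_2,\Delta_3)$ with $\Delta_1\in\C^{r,r},\Delta_2\in\C^{r,n},\Delta_3\in\C^{n,r}$ satisfying $\det(I-\Delta M)=0$, which by Definition~\eqref{def:mu1} is $(\mu_{\mathcal S}(M))^{-1}$ with $M=\mat{cc}I_r & \\ & I_n\\ I_r & \rix (S(\lambda))^{-1}\mat{ccc}I_r & I_r & \\ & & I_n\rix$. (I note the theorem statement writes the second selection matrix as $\mat{ccc}I_r & I_r&\\&&I_n\rix$, which is $(r+n)\times(2r+n)$, and the first as $(2r+n)\times(r+n)$ — dimensions are consistent with $M$ being $(2r+n)\times(2r+n)$, matching $\mathcal S$.)

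The only genuine point requiring care — and the one I would treat as the main obstacle, modest as it is — is the block bookkeeping in the factorization identity: one must confirm that the chosen left and right selection matrices reproduce $\Delta S(\lambda)$ with $\Delta_A,\Delta_B,\Delta_C$ in the correct off-diagonal positions and a true zero in the $(2,2)$ block, and that the resulting $M$ is precisely the one claimed (in particular the placement of the two $I_r$ copies on the left versus the right). This is the rectangular analogue of the square case; unlike the general $\mu$-value theory of Section~\ref{sec:muvalue}, here no further argument is needed because the perturbation already has the block-diagonal form demanded by~\eqref{def:mu1}, so finiteness is automatic (take $\Delta_A=A-\lambda I_r$, $\Delta_B=B$) and no existence-of-minimizer subtlety arises. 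Everything else is the verbatim reasoning of Theorem~\ref{thm:perAB}.
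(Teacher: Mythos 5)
Your strategy is exactly the paper's: factor $\Delta S(\lambda)$ as (selection matrix)$\,\cdot\,\diag(\Delta_A,\Delta_B,\Delta_C)\,\cdot\,$(selection matrix), apply $\det(I-XY)=\det(I-YX)$, and identify $\nrm{\Delta S(\lambda)}_\infty$ with $\sigma_{\max}(\diag(\Delta_A,\Delta_B,\Delta_C))$. The final $M$ and $\mathcal S$ you state are the correct ones. However, the one step that carries the actual content of the proof --- the factorization identity --- is written incorrectly: you placed the tall matrix $\mat{cc}I_r & \\ & I_n\\ I_r& \rix\in\C^{2r+n,\,r+n}$ to the \emph{left} of $\diag(\Delta_A,\Delta_B,\Delta_C)\in\C^{2r+n,\,2r+n}$ and the wide matrix $\mat{ccc}I_r & I_r & \\ & & I_n\rix\in\C^{r+n,\,2r+n}$ to its \emph{right}, so neither product conforms ($r+n\neq 2r+n$), and the subsequent determinant display $\det\big(I-\Delta\,\mat{ccc}I_r & I_r & \\ & & I_n\rix S(\lambda)^{-1}\mat{cc}I_r & \\ & I_n\\ I_r& \rix\big)$ inherits the same dimensional mismatch. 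The correct identity, which is what the paper uses, is
\begin{equation*}
\mat{cc}\Delta_A & \Delta_B\\ \Delta_C & 0_{n}\rix
=\mat{ccc}I_r & I_r & 0_{r,n}\\ 0_{n,r} & 0_{n,r} & I_n\rix\,
\diag(\Delta_A,\Delta_B,\Delta_C)\,
\mat{cc}I_r & 0_{r,n}\\ 0_{n,r} & I_n\\ I_r & 0_{r,n}\rix,
\end{equation*}
i.e.\ wide on the left, tall on the right (here $\diag(\Delta_A,\Delta_B,\Delta_C)$ has block rows $r,r,n$ and block columns $r,n,r$, so the left factor's block columns $r,r,n$ and the right factor's block rows $r,n,r$ match). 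Sylvester then yields $\det\big(I_{2r+n}-\Delta\,T S(\lambda)^{-1}W\big)=0$ with $W$ the wide and $T$ the tall factor, which is precisely $M=T S(\lambda)^{-1}W$ as in the theorem.

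Since you explicitly flagged this bookkeeping as unverified and the rest of the argument (Sylvester identity, the norm identity $\nrm{\Delta S(\lambda)}_\infty=\|\Delta\|$, and the passage to $\mu_{\mathcal S}(M)$) is sound and identical to the paper's proof of Theorem~\ref{thm:perAB}, this is a correctable transposition rather than a conceptual failure --- but as written the factorization and the intermediate determinant condition are false, and they are the only nontrivial steps of the proof, so the verification you deferred must actually be carried out as above.
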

\proof The proof follows similar to Theorem~\ref{thm:perAB} by observing that for any 
$\Delta S(z)=\mat{cc}\Delta_A & \Delta_B \\ \Delta_C & 0\rix \in \mathbb S(A,B,C)$, where 
$\Delta_A \in \C^{r,r}$, $\Delta_B\in \C^{r,n}$, and $\Delta_C\in\C^{n,r}$, we have
\begin{align*}
	{\det}(S(\lambda)-\Delta S(\lambda))=0 & \iff {\det}\left(S(\lambda) - \mat{cc}\Delta_A & \Delta_B \\ \Delta_C & 0  \rix \right)=0 
\end{align*}
if and only if 
\[
{\det}\Big(S(\lambda) - \mat{ccc}I_r & I_r& 0_{r,n}\\ 0_{n,r}&0_{n,r}&I_n \rix{\diag}(\Delta_A,\Delta_B,\Delta_C)\mat{cc}I_r & 0_{r,n} \\ 0_{n,r}&I_n \\I _r& 0_{r,n} \rix \Big)=0,
\]
which is equivalent to the condition that 
\begin{equation}\label{det:ABC}
{\det}\left(I_{2r+n} - \Delta J_2 (S(\lambda))^{-1} J_1\right)=0,
\end{equation}
where $J_1=\mat{ccc}I_r && \\ &&I_n \rix$,  $J_2=\mat{cc}I_r &  \\ &I_n \\I _r&  \rix$ and $\Delta={\diag}(\Delta_A,\Delta_B,\Delta_C)$. 
Using~\eqref{det:ABC} for the determinant condition in~\eqref{error:ABC}, we have
\begin{align*}
	\eta^{\mathbb{S}}(\lambda,A,B,C)&=\inf\{\max\{\|\Delta_A\|,\|\Delta_B\|,\|\Delta_C\| \}\; : \; \Delta_A \in \C^{r,r}, \;\Delta_B\in \C^{r,n},\; \Delta_C\in\C^{n,r},\;&\\ 
	& \quad \quad \quad\Delta={\diag}(\Delta_A,\Delta_B,\Delta_C),\;{\det}(I_{2r+n} - \Delta J_2 S(\lambda)^{-1} J_1)=0\}\\
	&=\inf\{\|\Delta\|\; : \; \Delta \in \mathcal S,\; {\det}(I_{2r+n} - \Delta J_2 S(\lambda)^{-1} J_1)=0\}\\
	& =\left(\mu_{\mathcal{S}}(J_2(S(\lambda))^{-1}J_1) \right)^{-1},
\end{align*}
where $\mathcal S=\{ \Delta \;:\;\Delta_1 \in \C^{r,r}, \;\Delta_2\in \C^{r,n},\; \Delta_3\in\C^{n,r},\;\Delta={\diag}(\Delta_1,\Delta_2,\Delta_3)\}$ and $\mu_{\mathcal{S}}(\cdot)$ is defined by~\eqref{def:mu}.
\eproof
Next, we state a result for the backward errors $\eta^{\mathbb{S}}(\lambda,A,B,P)$, $\eta^{\mathbb{S}}(\lambda,A,C,P)$, and $\eta^{\mathbb{S}}(\lambda,B,C,P)$, proof of which is analogous to Theorem~\ref{thm:per_ABC} and hence omitted. 
\begin{theorem}\label{theorem:three}
	Consider the Rosenbrock system matrix $S(z)$ of the form~\eqref{mat:system} and let $\lambda\in \C$ be such that $(S(\lambda))^{-1}$ exists. Let $\tilde{J_1}$ and $\tilde{J_2}$ be defined by~\eqref{mat:tildeJ1J2}. Then,
	\begin{enumerate}
		\item perturbing $A$, $B$ and $P$, i.e. when the perturbation set is $\mathbb S (A,B,P):=\mathbb S(A,B,0,P)$ in~\eqref{def:error}, we have
\begin{equation}
\eta^{\mathbb{S}}(\lambda,A,B,P)=\left(\mu_{\mathcal{S}}(J_2(S(\lambda))^{-1} [I_{r+n}\; \lambda I_{r+n}\; \cdots \;\lambda^{d}I_{r+n}]J_1)\right)^{-1},
\end{equation}
where $\mathcal{S}=\{{\diag}(\Delta_1,\ldots,\Delta_{d+3})\; : \; \Delta_1\in\C^{r,r},\Delta_2\in\C^{r,n},\;\Delta_i\in \C^{n,n}, i=3,\ldots,d+3\}$ and $J_1\in \C^{(d+1)(r+n),(d+1)n+2r}$ and $J_2\in \C^{(d+2)n+r,(r+n)}$ are defined as
\begin{equation}
J_1=\mat{ccc|c} I_r &I_r&0_{r,n}& \\0_{n,r}&0_{n,r}&I_n&\\ \hline &&&\tilde{J_1}\rix \quad \text{and} \quad J_2=\mat{c}\begin{array}{cc}I_r& 0_{r,n} \\ 0_{n,r}& I_n\\0_{n,r}& I_n \end{array}\\ \hline \tilde{J_2} \rix.
\end{equation}
		\item perturbing $A$, $C$ and $P$, i.e. when the perturbation set is $\mathbb S (A,C,P):=\mathbb S(A,0,C,P)$ in~\eqref{def:error}, we have
\begin{equation}
\eta^{\mathbb{S}}(\lambda,A,C,P)=\left(\mu_{\mathcal{S}}(J_2(S(\lambda))^{-1} [I_{r+n}\; \lambda I_{r+n}\; \cdots \;\lambda^{d}I_{r+n}]J_1)\right)^{-1},
\end{equation}
where $\mathcal{S}=\{{\diag}(\Delta_1,\ldots,\Delta_{d+3})\; : \; \Delta_1\in\C^{r,r},\Delta_2\in\C^{n,r},\;\Delta_i\in \C^{n,n}, i=3,\ldots,d+3\}$ and $J_1\in \C^{(d+1)(r+n),(d+2)n+r}$ and $J_2\in \C^{(d+1)n+2r,(r+n)}$ are defined as
\begin{equation}
J_1=\mat{ccc|c} I_r &0_{r,n}&0_{r,n}& \\0_{n,r}&I_n&I_n&\\ \hline &&&\tilde{J_1}\rix \quad \text{and} \quad J_2=\mat{c}\begin{array}{cc}I_r&0_{r,n} \\ I_r&0_{r,n} \\0_{n,r}& I_n \end{array}\\ \hline \tilde{J_2} \rix.
\end{equation}
		\item perturbing $B$, $C$ and $P$, i.e. when the perturbation set is $\mathbb S (B,C,P):=\mathbb S(0,B,C,P)$ in~\eqref{def:error}, we have
\begin{equation}
\eta^{\mathbb{S}}(\lambda,B,C,P)=\left(\mu_{\mathcal{S}}(J_2(S(\lambda))^{-1} [I_{r+n}\; \lambda I_{r+n}\; \cdots \;\lambda^{d}I_{r+n}]J_1)\right)^{-1},
\end{equation}
where $\mathcal{S}=\{{\diag}(\Delta_1,\ldots,\Delta_{d+3})\; : \; \Delta_1\in\C^{r,n},\Delta_2\in\C^{n,r},\;\Delta_i\in \C^{n,n}, i=3,\ldots,d+3\}$, and $J_1\in \C^{(d+1)(r+n),(d+2)n+r}$ and $J_2\in \C^{(d+2)n+r,(r+n)}$ are defined as
\begin{equation}
J_1=\mat{ccc|c} I_r &0_{r,n}&0_{r,n}& \\0_{n,r}&I_n&I_n&\\ \hline &&&\tilde{J_1}\rix \quad \text{and} \quad J_2=\mat{c}\begin{array}{cc}0_{n,r}&I_n \\ I_r& 0_{r,n}\\0_{n,r}& I_n \end{array}\\ \hline \tilde{J_2} \rix.
\end{equation}
\end{enumerate}
\end{theorem}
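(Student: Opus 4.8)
The plan is to follow the template of the proof of Theorem~\ref{thm:per_ABC} for each of the three cases, modified to absorb the polynomial part $\Delta_P(z)=\sum_{j=0}^d z^j\Delta_{A_j}$ exactly as in the proof of Theorem~\ref{theorem:ABCP} through the matrices $\tilde J_1,\tilde J_2$ of~\eqref{mat:tildeJ1J2}. I would write out the case $\{A,B,P\}$ in full and then obtain $\{A,C,P\}$ and $\{B,C,P\}$ by the same steps after relabeling the first two diagonal blocks.

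For $\{A,B,P\}$, I would fix an arbitrary $\Delta S(z)=\mat{cc}\Delta_A&\Delta_B\\0&\Delta_P\rix\in\mathbb{S}(A,B,P)$ and, as in the proof of Theorem~\ref{theorem:ABCP}, factor $\Delta S(\lambda)=[I_{r+n}\;\lambda I_{r+n}\;\cdots\;\lambda^dI_{r+n}]\,W$, where $W$ is the block-stacked matrix whose leading $(r+n)\times(r+n)$ block is $\mat{cc}\Delta_A&\Delta_B\\0&\Delta_{A_0}\rix$ and whose $j$-th block ($1\le j\le d$) is $\mat{cc}0&0\\0&\Delta_{A_j}\rix$. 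The crucial identity is $W=J_1\Delta J_2$ with $\Delta={\diag}(\Delta_A,\Delta_B,\Delta_{A_0},\ldots,\Delta_{A_d})$ and $J_1,J_2$ as in the statement; it is verified by a direct block multiplication, the pair $\tilde J_1,\tilde J_2$ reproducing the polynomial rows $\mat{cc}0&0\\0&\Delta_{A_j}\rix$ exactly as in Theorem~\ref{theorem:ABCP}, and the leading block being recovered from the identity $\mat{ccc}I_r&I_r&0\\0&0&I_n\rix{\diag}(\Delta_A,\Delta_B,\Delta_{A_0})\mat{cc}I_r&0\\0&I_n\\0&I_n\rix=\mat{cc}\Delta_A&\Delta_B\\0&\Delta_{A_0}\rix$. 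Since $(S(\lambda))^{-1}$ exists, combining $\det(S(\lambda)-XY)=\det(S(\lambda))\det(I-Y(S(\lambda))^{-1}X)$ with $\det(I-UV)=\det(I-VU)$ rewrites $\det(S(\lambda)-\Delta S(\lambda))=0$ as $\det(I-\Delta M)=0$, where $M=J_2(S(\lambda))^{-1}[I_{r+n}\;\lambda I_{r+n}\;\cdots\;\lambda^dI_{r+n}]J_1$ is the matrix in the statement.

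Next, exactly as in~\eqref{delta:ABCP22}, I would use that ${\nrm{\Delta S(z)}}_\infty=\max\{\|\Delta_A\|,\|\Delta_B\|,\|\Delta_{A_0}\|,\ldots,\|\Delta_{A_d}\|\}=\|\Delta\|=\sigma_{\max}(\Delta)$, because the singular values of the (rectangular) block-diagonal matrix $\Delta$ are the union of those of its blocks, and that as $\Delta S(z)$ ranges over $\mathbb{S}(A,B,P)$ the matrix $\Delta$ ranges over all of $\mathcal{S}=\{{\diag}(\Delta_1,\ldots,\Delta_{d+3})\;:\;\Delta_1\in\C^{r,r},\,\Delta_2\in\C^{r,n},\,\Delta_i\in\C^{n,n},\,i=3,\ldots,d+3\}$. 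Substituting both facts into~\eqref{def:error} then gives
\[
\eta^{\mathbb{S}}(\lambda,A,B,P)=\inf\{\sigma_{\max}(\Delta)\;:\;\Delta\in\mathcal{S},\ \det(I-\Delta M)=0\}=(\mu_{\mathcal{S}}(M))^{-1},
\]
which proves the case $\{A,B,P\}$. For $\{A,C,P\}$ and $\{B,C,P\}$ only the leading block of $W$ changes, to $\mat{cc}\Delta_A&0\\ \Delta_C&\Delta_{A_0}\rix$ and $\mat{cc}0&\Delta_B\\ \Delta_C&\Delta_{A_0}\rix$ respectively; this forces the stated form of the first two block-columns of $J_1$, the first two block-rows of $J_2$, and the order of the first two blocks of $\mathcal{S}$, while the remainder of the argument is identical.

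The only delicate point is the block bookkeeping in $W=J_1\Delta J_2$: one must track the positions of the rectangular blocks $\Delta_B\in\C^{r,n}$ and $\Delta_C\in\C^{n,r}$ and ensure that the column partition of $J_1$ and the row partition of $J_2$ are compatible with the (rectangular) row and column partitions of $\Delta$. Once the $J_1,J_2$ written in the statement are at hand this is a routine verification, so I expect no real obstacle. Finally, the finiteness of $\eta^{\mathbb{S}}(\lambda,\cdot)$ recorded just before the theorem guarantees that the set $\{\Delta\in\mathcal{S}:\det(I-\Delta M)=0\}$ is nonempty, so $\mu_{\mathcal{S}}(M)>0$ and the reciprocal $(\mu_{\mathcal{S}}(M))^{-1}$ is well defined.
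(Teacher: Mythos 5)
Your proposal is correct and is exactly the argument the paper intends: the authors omit the proof, stating it is analogous to Theorem~\ref{thm:per_ABC} (and to Theorem~\ref{theorem:ABCP} for the polynomial part), and your factorization $\Delta S(\lambda)=[I_{r+n}\;\lambda I_{r+n}\;\cdots\;\lambda^dI_{r+n}]J_1\Delta J_2$ together with $\det(I-UV)=\det(I-VU)$ and $\nrm{\Delta S(z)}_\infty=\sigma_{\max}(\Delta)$ is precisely that analogous argument, with the block bookkeeping checking out in all three cases.
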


\section{Structured $\mu$-values of rectangular matrices}\label{sec:muvalue}

Regardless of full and partial perturbations, the eigenvalue backward errors of the Rosenbrock system matrix can be expressed as the structured $\mu$-value of  a matrix 
$M \in \C^{k,p}$ in the form 
\begin{equation}\label{def:mu} 
	\mu_{\mathcal{S}}(M):= \left(\min \left\{ \sigma_{\max}(\Delta)\; : \; \Delta \in \mathcal{S}, \; \text{det}(I_p - \Delta M) = 0 \right\} \right)^{-1}
\end{equation}
with respect to the perturbation set 
\begin{equation}\label{eq:defpermu}
	\mathcal{S}:=\left\{ \diag(\Delta_1,\Delta_2,\ldots,\Delta_n)\; : \; \Delta_i \in \C^{p_i \times k_i},\; \sum_{i=1}^n k_i = k ,\; \sum_{i=1}^n p_i = p \right\}.
\end{equation}
For partial perturbations, a few cases (Theorem~\ref{thm:perA} and Theorem~\ref{thm:oneperturbothercases}) involve an exact formula in terms of the largest singular value of some matrix, but in general we must address the $\mu$-value problem~\eqref{def:mu}. 

When $\mathcal S=\C^{p,k}$ in~\eqref{def:mu}, then $\mu_{\mathcal{S}}(M)$ is called the unstructured $\mu$-value and denoted by $\mu(M)=\mu_{\C^{p,k}}(M)$.
It is shown in~\cite{Kar11} that 
\[
\mu(M)=\|M\|=\sigma_{\max}(M).
\]
Clearly, we have 
\[
\mu_{\mathcal{S}}(M)\leq \mu(M)=\|M\|.
\]

The $\mu$-value problem for a square matrix $M \in \C^{n,n}$, subject to the perturbation set $\tilde {\mathcal S}=\left\{{\diag}(\Delta_1,\Delta_2,\ldots,\Delta_r)\;: \; \Delta_i\in\C^{n_i,n_i},~\sum_{i=1}^r n_i =n\right\}$ was addressed  in~\cite{doyle82} and shown that 
	\begin{equation}\label{doyle}
	\sup_{U} \rho(UM) = \mu_{\tilde {\mathcal S}}(M) \leq \inf_{D} \sigma_{\max}(DMD^{-1}),
\end{equation}
where $U=\text{diag}(U_1,\ldots,U_r)$ such that $U_i\in \C^{n_i,n_i}$ are unitary matrices,  $D=\text{diag}(d_1I_{n_1},\ldots,d_rI_{n_r})$ with $d_i>0$ for all 
$i$, and $\rho(\cdot)$ is  the spectral radius.
Moreover, if $r\leq 3$, then the inequality in~\eqref{doyle} is equality.
However, the framework in~\cite{doyle82} uses the unitary matrices, that can not be directly applied to the $\mu$-value problem subject to $\mathcal S$~\eqref{eq:defpermu}, when the digaonal blocks are rectangular. To extend the framework in~\cite{doyle82}, we use partial isometric matrices. 

We say that $P\in \C^{m,n}$ is a partially isometric matrix if for $x\in \C^n$ we have $\|Px\|=\|x\|$ whenever $x \perp \rm{null}(P)$. 
For given $x\in \C^m$ and $y\in \C^l\setminus\{0\}$, there exists a partially isometric matrix $P$ such that $Px=y$ if and only if $\|x\|=\|y\|$. Indeed,  if $\|x\|=\|y\|$, then $P=yx^*/\|x\|^2$ is a partially isometric matrix mapping $x$ to $y$. Conversely, for any partially isometric matrix $P$ satisfying $Px=y$, we have $\|x\|=\|Px\|=\|y\|$. 
We denote the set of block diagonal partial isometric matrices by $\mathbb{P}$ and is defined by
\begin{equation}
	\mathbb{P}:= \left\{\diag(P_1,P_2,\ldots, P_n) \; : \; P_i \in \C^{p_i,k_i}\; \text{are partially isometric matrices}, \; \sum_{i=1}^n k_i = k, \; \sum_{i=1}^n p_i = p \right\}.
\end{equation}
For $x=(x_1,\ldots,x_n)\in \R^n$, let us define two matrices $D_1(x)$ and $D_2(x)$ by
\begin{equation}\label{def:D1} 
	D_1(x):=\diag(\exp(x_1)I_{k_1},\exp(x_2)I_{k_2},\ldots,\exp(x_n)I_{k_n})
\end{equation}
and
\begin{equation}\label{def:D2}
	D_2(x):=\diag(\exp(x_1)I_{p_1},\exp(x_2)I_{p_2},\ldots,\exp(x_n)I_{p_n}),
\end{equation}
where $k_i$ and $p_i$ are such that $\sum_{i=1}^n k_i=k$ and $\sum_{i=1}^np_i=p$. 
Notice that for any $x\in \R^{n}$ and $\Delta\in\mathcal{S}$, we have $D_1(x)\Delta D_2(-x)=\Delta$ and hence $\mu_{\mathcal{S}}(M)=\mu_{\mathcal{S}}(D_2(-x)M D_1(x))$. 

Next we give the results obtained for the structured $\mu$-value, $\mu_{\mathcal{S}}(M)$. As the techniques used to obtain these results are similar to the ones used in~\cite{doyle82}, we ommit the proofs here and keep the details in the 
Appendix. 
\begin{theorem}\label{thm:formula1}
	Let $M\in \C^{k,p}$. Then the $\mu$-value of $M$ with respect to perturbations from the set $\mathcal{S}$ defined in~\eqref{eq:defpermu} is given by
	\begin{equation}\label{mu:lbound}
		\mu_{\mathcal{S}}(M)=\sup_{P\in\mathbb{P}} \rho (PM).
	\end{equation}
\end{theorem}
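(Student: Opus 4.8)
The plan is to prove the two inequalities $\mu_{\mathcal S}(M)\ge \sup_{P\in\mathbb P}\rho(PM)$ and $\mu_{\mathcal S}(M)\le \sup_{P\in\mathbb P}\rho(PM)$ separately, mimicking the classical argument of~\cite{doyle82} but replacing block-diagonal unitaries by block-diagonal partial isometries so that rectangular blocks are handled. For the lower bound, fix any $P=\diag(P_1,\dots,P_n)\in\mathbb P$ and let $\nu=\rho(PM)$ with $PMv=\nu v$ for some eigenvector $v\ne 0$ (if $\nu=0$ there is nothing to prove). Rescaling $P$ by a unimodular scalar we may assume $\nu\ge 0$; then $I_p-\tfrac1\nu PM$ is singular, so $\Delta:=\tfrac1\nu P\in\mathcal S$ (each block $\tfrac1\nu P_i$ still has the right size, and $\sigma_{\max}(\Delta)=\tfrac1\nu\sigma_{\max}(P)\le\tfrac1\nu$ since a partial isometry has spectral norm at most $1$) satisfies $\det(I_p-\Delta M)=0$. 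Hence $\mu_{\mathcal S}(M)\ge 1/\sigma_{\max}(\Delta)\ge\nu$; taking the supremum over $P$ gives one direction. A minor wrinkle is making the eigenvalue nonnegative and real: one absorbs the phase of $\nu$ into one of the diagonal blocks $P_i$, which remains a partial isometry, exactly as phases are absorbed into unitaries in the square case.

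For the upper bound, let $\Delta^\star=\diag(\Delta_1,\dots,\Delta_n)\in\mathcal S$ be an optimal (or near-optimal) perturbation, so $\sigma_{\max}(\Delta^\star)=1/\mu_{\mathcal S}(M)$ and $\det(I_p-\Delta^\star M)=0$, i.e.\ there is $w\ne 0$ with $\Delta^\star M w=w$. Set $v=Mw$; then $\Delta^\star v=w$ and $Mv = M\Delta^\star v$. The idea is to replace each block $\Delta_i$ by a partial isometry $P_i$ that agrees with $\Delta_i$ on the relevant vector: writing $v=(v_1,\dots,v_n)$ and $w=(w_1,\dots,w_n)$ compatibly with the block structure, we have $\Delta_i v_i = w_i$ and $\|w_i\|=\|\Delta_i v_i\|\le\sigma_{\max}(\Delta_i)\,\|v_i\|\le\|v_i\|/\mu_{\mathcal S}(M)$. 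One then wants to build $P_i$ partially isometric with $P_i v_i = \mu_{\mathcal S}(M)\,w_i$; this is possible by the construction recalled just before the theorem ($P_i = y x^*/\|x\|^2$ with $x=v_i$, $y=\mu_{\mathcal S}(M) w_i$) provided $\|\mu_{\mathcal S}(M) w_i\|\le\|v_i\|$, which is exactly the inequality above. With $P=\diag(P_1,\dots,P_n)\in\mathbb P$ we get $PMw = Pv = \mu_{\mathcal S}(M)\,w$, so $\mu_{\mathcal S}(M)$ is an eigenvalue of $PM$, whence $\rho(PM)\ge\mu_{\mathcal S}(M)$ and therefore $\sup_{P\in\mathbb P}\rho(PM)\ge\mu_{\mathcal S}(M)$.

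I would then combine the two inequalities to conclude $\mu_{\mathcal S}(M)=\sup_{P\in\mathbb P}\rho(PM)$, also noting that the supremum is attained (the set of block-diagonal partial isometries of the prescribed shape is compact and $P\mapsto\rho(PM)$ is continuous), so it could equally be written as a maximum. One should also dispose of the degenerate case at the outset: if there is no $\Delta\in\mathcal S$ with $\det(I_p-\Delta M)=0$ then $\mu_{\mathcal S}(M)=0$ by convention, and the lower-bound argument shows every $P\in\mathbb P$ gives $\rho(PM)=0$ as well (an eigenpair $PMv=\nu v$ with $\nu\ne 0$ would produce such a $\Delta$), so both sides vanish.

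The main obstacle is the upper-bound direction, specifically guaranteeing that the block-wise partial isometry $P_i$ can be chosen to send $v_i$ to $\mu_{\mathcal S}(M) w_i$ \emph{simultaneously across all blocks} while keeping the global $\det(I_p-\tfrac{1}{\mu_{\mathcal S}(M)}PM)=0$ structure intact; the norm bookkeeping $\|\mu_{\mathcal S}(M) w_i\|\le\|v_i\|$ per block is what makes it work, and it hinges on $\sigma_{\max}(\Delta_i)\le\sigma_{\max}(\Delta^\star)=1/\mu_{\mathcal S}(M)$, i.e.\ on the fact that the spectral norm of a block-diagonal matrix is the max of the block norms. Edge cases where some $v_i=0$ (so $w_i=0$ too, forcing nothing on $P_i$, and one just takes $P_i$ any fixed partial isometry of the right shape) need a line of care but are routine. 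The rest is essentially the bookkeeping of~\cite{doyle82} transcribed to the rectangular setting, which is why the detailed write-up is deferred to the appendix.
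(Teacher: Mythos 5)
Your lower-bound direction ($\mu_{\mathcal S}(M)\ge\sup_{P\in\mathbb P}\rho(PM)$) is correct and is essentially the paper's argument: $\lambda P\in\mathcal S$ with $\|\lambda P\|\le|\lambda|$, and the minimal $|\lambda|$ making $I_p-\lambda PM$ singular is $1/\rho(PM)$.

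The upper-bound direction has a genuine gap, and it sits exactly where you flag "the main obstacle." From $\Delta_i v_i=w_i$ you only get $\|\mu_{\mathcal S}(M)\,w_i\|\le\|v_i\|$, with strict inequality whenever $\sigma_{\max}(\Delta_i)<\sigma_{\max}(\Delta^\star)$ or $v_i$ is not a maximal right singular vector of $\Delta_i$ --- and nothing forces the optimal $\Delta^\star$ to saturate the norm in every block. In that case the matrix $P_i=yx^*/\|x\|^2$ with $x=v_i$, $y=\mu_{\mathcal S}(M)w_i$ is \emph{not} a partial isometry: its unique nonzero singular value is $\|y\|/\|x\|<1$. (The construction recalled before the theorem requires $\|x\|=\|y\|$ exactly; with an inequality you only get a contraction.) Hence your $P$ need not lie in $\mathbb P$, and $\rho(PM)\ge\mu_{\mathcal S}(M)$ tells you nothing about $\sup_{P\in\mathbb P}\rho(PM)$. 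A more elaborate choice of $P_i$ (picking a null space so that the projection of $v_i$ onto its orthogonal complement has norm $\|\mu_{\mathcal S}(M)w_i\|$) can repair blocks with $k_i\ge 2$, but it is impossible for blocks with $k_i=1$ and $0<\|\mu_{\mathcal S}(M)w_i\|<\|v_i\|$: a partial isometry with one-dimensional domain either kills a vector or preserves its norm. So the eigenvector-transfer strategy cannot close the argument on its own.

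What is missing is precisely the ingredient the paper uses: Lemma~\ref{lemma:doyle} (Doyle's polynomial lemma). Writing the optimal perturbation blockwise as $\hat\Delta_j=U_j\Sigma_jV_j^*$, one views $\det(I_p-\hat U X\hat V^*M)$ as a polynomial in the diagonal entries of $X$; the lemma guarantees that the minimal-$\|\cdot\|_\infty$ nonnegative real root $\hat\Sigma$ can be replaced by $\delta I_k$ without losing the determinant condition, i.e.\ all singular values of the optimal perturbation can be inflated to $\delta$, after which $\hat\Delta=\delta\hat U\hat V^*$ with $\hat U\hat V^*\in\mathbb P$. This "inflation" step is the nontrivial content of the upper bound, and your proposal does not supply a substitute for it.
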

\begin{proof}
	See~\ref{Appendix1}.
\end{proof}	

\begin{theorem}\label{theorem:mu}
	Let $M\in \C^{k, p}$, and $D_1(x)$ and $D_2(x)$ be as defined in~\eqref{def:D1} and~\eqref{def:D2}, respectively. Then 
	the structured $\mu$-value of $M$ satisfies
	\begin{equation}\label{mu:ubound}
		\mu_{\mathcal{S}}(M)\leq \inf_{x\in \R^n} \sigma_{\max}\left( D_1(x)MD_2(-x)\right).
	\end{equation}
	Moreover, 
		\begin{enumerate}
	\item if the infimum	of $\sigma_{\max}(D_1(x)MD_2(-x))$ in~\eqref{mu:ubound} is attained at a point $\hat x\in \R^n$ and the optimal $\sigma_{\max}(D_1(\hat x)MD_2(-\hat x))$ is a simple singular value of $\hat{M}:=D_1(\hat x)MD_2(-\hat x)$, then equality holds in~\eqref{mu:ubound}.
	\item if the number of blocks in the pertubation matrix $\Delta$ in $\mathcal S$ is less than or equal to three, i.e., $n\leq 3$, then equality holds in~\eqref{mu:ubound}.
		\end{enumerate}
\end{theorem}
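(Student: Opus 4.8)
The plan is to mirror the classical argument of~\cite{doyle82}, but carried out with the two rectangular scaling families $D_1(x)$ and $D_2(x)$ in place of a single family of square scalings. I would first establish the inequality~\eqref{mu:ubound}. For any $x\in\R^n$, note that $D_1(x)$ and $D_2(x)$ are invertible and, as observed just before the statement, $D_1(x)\Delta D_2(-x)=\Delta$ for every $\Delta\in\mathcal S$ because the block sizes of $D_1$ match the column partition of $\Delta$ and those of $D_2$ match the row partition. Hence $\det(I_p-\Delta M)=\det(I_p-\Delta D_2(-x)(D_1(x)MD_2(-x))^{\phantom{-}}\!)$ — more precisely, writing $\widehat M:=D_1(x)MD_2(-x)$ one checks $\det(I_p-\Delta M)=0\iff\det(I_k-\widehat M\,\Delta)=0$ using the Sylvester determinant identity and the commutation $D_1(x)\Delta = \Delta D_2(x)$ adapted to rectangular blocks. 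This gives $\mu_{\mathcal S}(M)=\mu_{\mathcal S}(\widehat M)\le\mu(\widehat M)=\sigma_{\max}(\widehat M)=\sigma_{\max}(D_1(x)MD_2(-x))$, where the middle inequality is the elementary bound $\mu_{\mathcal S}\le\mu$ and the evaluation $\mu=\sigma_{\max}$ is the result of~\cite{Kar11} quoted in the text. Taking the infimum over $x$ yields~\eqref{mu:ubound}.

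For part~(1), assume the infimum is attained at $\hat x$ and that $\sigma:=\sigma_{\max}(\widehat M)$, with $\widehat M=D_1(\hat x)MD_2(-\hat x)$, is simple. By Theorem~\ref{thm:formula1} it suffices to produce a block-diagonal partial isometry $P\in\mathbb P$ with $\rho(P\widehat M)=\sigma$, since then $\mu_{\mathcal S}(M)=\mu_{\mathcal S}(\widehat M)\ge\rho(P\widehat M)=\sigma$, matching the upper bound. Let $u\in\C^k$, $v\in\C^p$ be the (essentially unique) left/right singular vector pair of $\widehat M$ for $\sigma$, so $\widehat M v=\sigma u$ and $\widehat M^* u=\sigma v$. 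The first-order optimality of $\hat x$ for the smooth function $x\mapsto\sigma_{\max}(D_1(x)MD_2(-x))$ — differentiable near $\hat x$ precisely because $\sigma$ is simple — gives, upon differentiating in each coordinate $x_j$, the $n$ balance relations $\|u^{(j)}\|^2=\|v^{(j)}\|^2$, where $u^{(j)},v^{(j)}$ are the $j$th blocks of $u,v$ under the $(k_i)$- and $(p_i)$-partitions respectively. (This is the rectangular analogue of the Doyle alignment condition; it is the step where the distinct row/column partitions genuinely enter.) Given these norm equalities, for each $j$ pick a partial isometry $P_j\in\C^{p_j,k_j}$ with $P_j u^{(j)}=v^{(j)}$, using the explicit construction $P_j=v^{(j)}(u^{(j)})^*/\|u^{(j)}\|^2$ recorded in the text (and $P_j=0$ when the block vanishes). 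Then $P:=\diag(P_1,\ldots,P_n)\in\mathbb P$ satisfies $Pu=v$, whence $P\widehat M v=\sigma Pu=\sigma v$, so $\sigma$ is an eigenvalue of $P\widehat M$; combined with $\rho(P\widehat M)\le\sigma_{\max}(P)\sigma_{\max}(\widehat M)\le\sigma$ this gives $\rho(P\widehat M)=\sigma$, as required.

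Part~(2) is the harder direction and I expect it to be the main obstacle, since it must hold without any a priori regularity at the optimizer. Here I would follow Doyle's convexity/compactness route: the function $g(x):=\sigma_{\max}(D_1(x)MD_2(-x))$ is (log-)convex — or can be replaced by a convex majorant — as a supremum over unit vectors $\xi$ of $\|D_1(x)MD_2(-x)\xi\|$, each summand being a sum of exponentials in $x$; for $n\le 3$ one argues that the infimum of $g$ is attained (after reducing to a compact set by ruling out escape to infinity, using that letting any $x_j\to\pm\infty$ cannot decrease $g$ below the relevant threshold once the corresponding blocks are nonzero) and that at the minimizer one can force the alignment condition $\|u^{(j)}\|=\|v^{(j)}\|$ for $j=1,\dots,n$ even when $\sigma$ is a repeated singular value, by choosing within the singular subspace — this is exactly where $n\le 3$ is used, as the relevant system of equations in the Grassmannian of the singular subspace is solvable only up to three blocks (the parity/topology obstruction identified in~\cite{doyle82}). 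Once the alignment holds for some admissible singular vector pair, the partial-isometry construction of part~(1) applies verbatim and gives equality via Theorem~\ref{thm:formula1}. As flagged in the text, the details are deferred to the Appendix; the only genuinely new ingredient relative to~\cite{doyle82} is bookkeeping the two partitions $(k_i)$ and $(p_i)$ separately throughout, which affects the dimensions of $D_1,D_2$ and the shapes of the $P_j$ but not the structure of the argument.
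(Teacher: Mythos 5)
Your upper-bound argument and your proof of part~(1) are correct and essentially identical to the paper's: the scaling invariance $D_2(-x)\Delta D_1(x)=\Delta$ plus $\mu_{\mathcal S}\le\mu=\sigma_{\max}$ gives \eqref{mu:ubound}, and at a differentiable minimizer the stationarity conditions are exactly the block balance relations $\|u^{(j)}\|=\|v^{(j)}\|$, after which the block partial isometry $P$ with $Pu=v$ yields $\rho(P\hat M)=\sigma_{\max}(\hat M)$ and Theorem~\ref{thm:formula1} closes the gap. (Minor dimension slip: for $\Delta\in\C^{p,k}$ the invariance reads $D_2(-x)\Delta D_1(x)=\Delta$, not $D_1(x)\Delta D_2(-x)=\Delta$; the latter is not even conformable.)

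Part~(2) as written has a genuine gap: you never actually identify, let alone prove, the mechanism by which $n\le 3$ forces the alignment condition at a non-smooth optimum. The paper's argument runs through the matrices $H_i=\sigma_{\max}(M)(\alpha_i^*\alpha_i-\beta_i^*\beta_i)$ of \eqref{def:Hi3}, built from the full singular subspace $(U_1,V_1)$ of dimension $r>1$: the directional derivative of $\sigma_{\max}$ is $\lambda_{\max}\bigl(\sum_i x_iH_i\bigr)$, equality in \eqref{mu:ubound} holds as soon as $0\in\mathcal{W}(H_1,\ldots,H_n)$ (a point of the joint numerical range gives a vector $v$ in the singular subspace with $\|\alpha_iv\|=\|\beta_iv\|$ for all $i$, and then your partial-isometry construction applies), and the reason $n\le 3$ suffices is that $\sum_{i=1}^nH_i=\sigma_{\max}(M)(U_1^*U_1-V_1^*V_1)=0$, so $\mathcal{W}(H_1,\ldots,H_n)$ is determined by at most two Hermitian matrices and is therefore convex; if $0\notin\mathcal{W}$, the nearest point $x_0$ of this convex set to the origin satisfies $\lambda_{\min}\bigl(\sum_ix_{0_i}H_i\bigr)\ge\|x_0\|^2>0$, so moving in direction $-x_0$ strictly decreases $\sigma_{\max}$ to first order, contradicting optimality. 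Your sketch replaces this with an unproved appeal to a ``parity/topology obstruction in the Grassmannian'' and to log-convexity and attainment of the infimum of $g$, none of which is established and none of which is what makes $n\le 3$ work. Without the identity $\sum_iH_i=0$ and the convexity of the joint numerical range of two Hermitian matrices, your part~(2) does not go through.
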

\begin{proof}
	See~\ref{Appendix2}.
\end{proof}	

By applying Theorem~\ref{theorem:mu} on the results in Sections~\ref{subsec:full} and~\ref{sec:parperbb}, we obtain a computable lower bound for the eigenvalue backward error of $S(z)$ with respect to full and partial perturbations. In particular,
 when the Rosenbrock matrix $S(z)$ is of the form $S(z)=\mat{cc}A-zI_r & B\\C & A_0\rix$, i.e., when $P(z)=A_0$, we have exact computable formulas for the eigenvalue backward errors in all cases of partial perturbations, as in such cases (Section~\ref{sec:parperbb}) the number of blocks in the perturbation matrix $\Delta$ in $\mathcal S$ would be less than or equal to three.

\section{Numerical experiments}\label{sec:num}
In this section, we consider some examples from literature to illustrate the results derived in this paper. We consider the rational eigenvalue problem and compute the corresponding Rosenbrock system matrix. 
Since the eigenvalue backward error of the Rosenbrock system matrix was reduced to an equivalent problem of computing $\mu$-values of some constant matrix, we compare our results with the most widely used MATLAB command ``mussv" to determine $\mu$-values.

We used GlobalSearch in MATLAB version 9.5.0 (R2018b) to compute the values of the associated optimization problem~(Theorem~\ref{theorem:mu}). 

\begin{example}[Vibration of fluid-solid structure~\cite{SuB11,Vos03}]{\rm
The rational eigenvalue problem arising from the simulation of mechanical vibration of fluid-solid structures is of the form
\begin{equation}\label{example1:eq1}
\left(M-zK + \sum_{i=1}^k \frac{z}{z-\alpha_i}E_i \right)x=0,
\end{equation}
where $\alpha_i>0$ are the poles for $i=1,2,\ldots,k$, $M,K\in \C^{n,n}$ are symmetric positive definite, and $E_i=C_iC_i^T$ with $C_i\in\C^{n,r_i} $ has rank $r_i$ for $i=1,2,\ldots,k$. It is easy to see that~\eqref{example1:eq1} can be written as
\begin{equation}
\left(M-zK-\sum_{i=1}^kC_iC_i^T - \sum_{i=1}^k \frac{\alpha_i}{\alpha_i-z}C_iC_i^T \right)x=0.
\end{equation} 
Let $\tilde{C}=[C_1\; C_2\; \ldots \; C_k]$ and $A={\diag}(\alpha_1I_{r_1},\alpha_2I_{r_2},\ldots,\alpha_kI_{r_k})$, then
\begin{equation}
\left(M-zK-\tilde{C}\tilde{C}^T - \tilde{C}A(A-zI_n)^{-1}\tilde{C}^T\right)x=0.
\end{equation}
The Rosenbrock system matrix in minimal state space form associated with the above rational eigenvalue problem is given by
\begin{equation}
S(z)=\mat{cc}A-zI_n & B\\ C & A_0+zA_1 \rix,
\end{equation}
where $B=\tilde{C}^T$, $C=\tilde{C}A$, $A_0=M-\tilde{C}\tilde{C}^T$ and $A_1=-K$. 
We randomly chose small scale matrices  $M,K\in\C^{5,5}$, $C\in \C^{5,3}$, and $A\in \C^{3,3}$ using the Matlab command randn. The eigenvalues of $S(z)$ are $-2.0575 - 0.3264i$, $0.9015 - 1.1390i$, $-0.2576 + 0.3492i$, $1.0829 + 0.7591i$, $1.0327 + 0.3282i$, $0.7219 + 0.1182i$, $0.2365 - 0.1340i$,  and $0.2357 - 0.9417i$. We take $\lambda= 0.7$, which is close to the eigenvalue $0.7219 + 0.1182i$ and compute the eigenvalue backward error $\eta^{\mathbb{S}}(\lambda,A,B,C,P)$  under perturbations to all blocks $A$, $B$, $C$, $A_0$ and $A_1$ of $S(z)$.  In view of~Theorem~\ref{theorem:ABCP}, the eigenvalue backward error is given by $\eta^{\mathbb{S}}(\lambda,A,B,C,P)= (\mu_{\mathcal S}(M))^{-1}$, where 
\[
M=\mat{cc}I_3 & \\ & I_5 \\I_3 & \\ & I_5 \\ & I_5 \rix (S(\lambda))^{-1} [I_{8}\; \lambda I_8]\mat{ccccc}I_3&I_3&&&\\&&I_5&I_5&\\&&&&0_{3,5}\\&&&&I_5 \rix
\]
 and $\mathcal{S}=\{{\diag}(\Delta_1,\Delta_2,\Delta_3,\Delta_4,\Delta_5)\;: \; \Delta_1\in\C^{3,3},\,\Delta_2\in \C^{3,5},\, \Delta_3\in \C^{5,3},\, \Delta_4\in \C^{5,5},\,\Delta_5\in\C^{5,5}\}$.
 Using Matlab command~\emph{mussv} with block structure given in $\mathcal S$, we obtain $20.485460811\leq\mu_{\mathcal{S}}(M)\leq 20.576426689$, and on applying Theorem~\ref{theorem:mu}, we get the upper bound $\mu_{\mathcal{S}}(M)\leq 20.485460812$, which is identical to the lower bound obtained using \emph{mussv} upto 9 digits. Thus, the structured eigenvalue backward error is given by  $\eta^{\mathbb{S}}(\lambda,A,B,C,P)=0.0488$. 
 
 If we consider perturbation only to blocks $B$ and $C$ of $S(z)$, then the eigenvalue backward error $\eta^{\mathbb{S}}(\lambda,B,C)$ is given by Theorem~\ref{theorem:two} as $\eta^{\mathbb{S}}(\lambda,B,C)=(\mu_{\mathcal{S}}(M))^{-1}$, where 
\[
M=\mat{cc}&I_5\\I_3& \rix S(\lambda)^{-1} \quad \text{and} \quad \mathcal{S}=\{{\diag}(\Delta_1,\Delta_2)\;:\; \Delta_1\in\C^{3,5},\;\Delta_2\in\C^{5,3}\}.
\]
Again using~\emph{mussv} command in MATLAB, the upper and lower bounds obtained for $\mu_{\mathcal{S}}(M)$ are $9.283991269267842$ and $9.272451127605631$, respectively. 
 However, when applying Theorem~\ref{theorem:mu}, the upper bound obtained is $9.272451127605656$ and is identical to the lower bound obtained using \emph{mussv} upto 13 decimal places.
Therefore, the upper bound provides the exact value for $\mu_{\mathcal{S}}(M)$.
Hence, we can deduce that $\eta^{\mathbb{S}}(\lambda,B,C)=0.10784$ gives the eigenvalue backward error.
%
%
}
\end{example}

\begin{example} {\rm \cite{guglielmi2017novel}
In the second example, we consider a matrix $M\in \C^{5,5}$ as 
\begin{equation}
	M=\mat{ccccc}
	i & \frac{1}{2}-\frac{1}{2}i & 1 & 1 & \frac{1}{2}\\ 
	\frac{1}{2} & -\frac{1}{2} & i & i & \frac{1}{2}-\frac{1}{2}i\\
	i & 1- \frac{1}{2} i & 1 & \frac{1}{2} & 0\\
	-\frac{1}{2} & \frac{1}{2}+i & -\frac{1}{2} +\frac{1}{2}i & 1+\frac{1}{2}i & \frac{1}{2}-\frac{1}{2}i\\
	\frac{1}{2}+i & \frac{1}{2}+\frac{1}{2}i & 0 & -\frac{1}{2} -\frac{1}{2}i & \frac{1}{2} - \frac{1}{2}i
	\rix
\end{equation}
and compute $\mu_{\mathcal{S}}(M)$ while considering perturbations from the set $\mathcal{S}=\{{\diag}(\Delta_1,\Delta_2)\; : \; \Delta_1\in \C^{2,3}, \Delta_2\in \C^{3,2}\}$.
Using the MATLAB function \emph{mussv} while considering the block structure as $[2,3;3,2]$, we obtain the bounds as $ 3.081980  \leq\mu_{\mathcal{S}}(M)\leq 3.097070$. However, by Theorem~\ref{theorem:mu}, we get the upper bound $\mu_{\mathcal{S}}(M)\leq 3.081980$, which is equal to the lower bound obtained using command mussv, showing the exactness of the upper bound in Theorem~\ref{theorem:mu}.
}
\end{example}

\section{Conclusion}

We have presented a reformulation for the structured eigenvalue backward error of the Rosenbrock system matrix in terms of $\mu$-value of a rectangular matrix with respect to rectangular block diagonal structure. For the reformulated problem, we have shown how to extend the results in~\cite{doyle82}. As a result, we obtain a lower bound for the structured eigenvalue backward error. Additionally, by exploiting the convexity of the joint numerical range , we obtain exact computable formulas for the  structured eigenvalue backward error for all cases of partial perturbations in the blocks of the Rosenbrock matrix. 

\bibliographystyle{plain}
\bibliography{PraS23a.bib}

\appendix 

\section{Proof of Theorem~\ref{thm:formula1}}\label{Appendix1}

We first state a result from~\cite{doyle82}, that will be useful in proving Theorem~\ref{thm:formula1}.
\begin{lemma}{\rm \cite{doyle82}}\label{lemma:doyle}
	Let $p(z_1,z_2,\ldots,z_n)$ be a polynomial in $n$ complex variables, and let $(\hat{z_1},\hat{z_2},\ldots, \hat{z_n})$ be such that it is a solution of $p(z_1,z_2,\ldots,z_n)=0$ with minimum $\|\cdot\|_{\infty}$ norm, where $\|\cdot\|_{\infty}$ stands for the $\infty$-norm of a vector. If $(\hat{z_1},\hat{z_2},\ldots, \hat{z_n})$ is real and non-negative, then there exists $(x_1,x_2,\ldots,x_n) \in \R^n$ non-negative such that $p(x_1,x_2,\ldots,x_n)=0$ and $x_i=\|\hat{z}\|_{\infty}$ for all $i=1,\ldots,n$.
\end{lemma}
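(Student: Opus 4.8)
The plan is to reduce the statement to producing a real non-negative zero of $p$ all of whose coordinates equal $m:=\|\hat z\|_\infty$, and then to reach such a point by continuously deforming $\hat z$ inside the zero set $Z:=\{z\in\C^n:p(z)=0\}$. First I would record two elementary consequences of the hypothesis. Since $\hat z$ is real and non-negative, $m=\max_i \hat z_i$, so the target point is $(m,\ldots,m)$ and it suffices to show $p(m,\ldots,m)=0$. Moreover, by the minimality of $\|\hat z\|_\infty$ every zero of $p$ has $\infty$-norm at least $m$; hence the set $Z_m:=\{z\in Z:\|z\|_\infty\le m\}$ is a compact set on which $\|z\|_\infty\equiv m$, and it contains $\hat z$. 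The whole argument then takes place inside $Z_m$, the set of minimal zeros.

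The core step is an equalization procedure that raises the non-maximal coordinates of $\hat z$ up to $m$ one at a time while staying in $Z_m$. Call a coordinate \emph{active} if $\hat z_i=m$ and \emph{inactive} if $\hat z_i<m$; if there are no inactive coordinates (in particular if $m=0$, where $\hat z=(0,\ldots,0)$ already) we are done. Otherwise, fix an inactive index $\ell$ and an active index $a$, freeze all remaining coordinates at their values in $\hat z$, and consider the two-variable restriction $G(t_a,t_\ell):=p(\ldots,t_a,\ldots,t_\ell,\ldots)$, which vanishes at $(m,\hat z_\ell)$. I would track a branch of the real curve $G=0$ emanating from this point in the direction of increasing $t_\ell$, keeping $t_\ell\in[\hat z_\ell,m]$ real and non-negative and using the intermediate value theorem together with the fundamental theorem of algebra to guarantee an accompanying real value of $t_a$. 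The global minimality of $m$ is what makes this branch behave: if $a$ is the only active coordinate then $|t_a|$ cannot drop below $m$ without forcing $\|z\|_\infty<m$, which is impossible, so $|t_a|=m$ along the branch and, by non-negativity, $t_a=m$; thus the move genuinely increases the number of coordinates equal to $m$. Iterating until no inactive coordinate remains yields a real non-negative zero with every coordinate equal to $m$, i.e. $p(m,\ldots,m)=0$.

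The hard part is to make the root-continuation in the previous step rigorous: one must rule out the tracked real root of $G=0$ turning back before $t_\ell$ reaches $m$, leaving $\R_{\ge 0}$, or disappearing into a complex-conjugate pair, and one must handle the degenerate situations where $G=0$ is singular at the tracked point or where several coordinates must move simultaneously. I expect the cleanest way to control these is a degree/winding-number argument in the spirit of \cite{doyle82}: count, via the argument principle, the zeros of the relevant univariate restriction inside a disc of radius $m$ and use continuity of roots as the frozen data vary, so that a real non-negative root of modulus exactly $m$ is forced to persist at the terminal configuration. The two hypotheses feed in at exactly the right places — reality and non-negativity let me run the intermediate value theorem on the real axis and stay in the non-negative orthant, while the global (not merely local) minimality of the $\infty$-norm supplies the rigidity that pins the active coordinates at $m$ throughout the deformation and prevents the norm from ever dropping below $m$.
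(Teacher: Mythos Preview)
The paper does not prove this lemma at all: it is quoted verbatim from \cite{doyle82} and used as a black box in the proof of Theorem~\ref{thm:formula1}. So there is no ``paper's own proof'' to compare against; your write-up is a standalone attempt at the cited result.

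On its merits, your outline has the right target (show $p(m,\ldots,m)=0$ with $m=\|\hat z\|_\infty$) and the right instinct (exploit the global minimality of $m$ to prevent the $\infty$-norm from dropping during a deformation), but there is a genuine gap at the heart of the equalization step. From minimality you correctly deduce that along your two-variable branch (with $t_\ell\in[\hat z_\ell,m)$ and the remaining coordinates frozen at values $<m$) one must have $|t_a|\ge m$. However, you then assert ``so $|t_a|=m$ along the branch,'' and this does not follow: nothing prevents the tracked root from moving into $|t_a|>m$, which produces a zero of $p$ with $\infty$-norm strictly greater than $m$ and contradicts nothing. Once $|t_a|$ is allowed to exceed $m$, the subsequent clause ``by non-negativity, $t_a=m$'' collapses, and with it the inductive decrease in the number of inactive coordinates. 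The same issue recurs in the multi-active-coordinate case, which you do not treat. You flag the continuation issues (realness, turning points, complex-conjugate collisions) as ``the hard part,'' but those are secondary to the missing upper bound $|t_a|\le m$; a degree/argument-principle count of zeros inside $\{|w|<m\}$ tells you there are none there, but again that is only the lower bound you already have.

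In short: the minimality hypothesis pins $|t_a|$ from below, not from above, so your branch need not stay in $Z_m$ and the induction does not close. If you want to repair the argument you need an additional mechanism that forces a root of the relevant univariate restriction to lie \emph{on} the circle $|w|=m$ at every stage, not merely outside the open disk; this is exactly the content supplied by Doyle's original proof, and it is not recoverable from the ingredients you have written down.
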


\noindent	{\underline{\it Proof of Theorem~\ref{thm:formula1}}}:~
	From~\eqref{def:mu}, we have
	\begin{equation*}
		\mu_{\mathcal S}(M)= \left(\min \left\{ \|\Delta\|\; : \; \Delta \in \mathcal{S}, \; \text{det}(I_p - \Delta M) = 0 \right\} \right)^{-1}.
	\end{equation*}
	Note that $\mathbb P \subseteq \mathcal S$, and for any $\lambda \in \C \setminus \{0\}$ and fixed $P \in \mathbb P$, $\lambda P \in \mathbb S$. This implies that 
	\begin{eqnarray}
		\mu_{\mathcal S}(M)
		&\geq& \left(\min \left\{ \|\lambda P\|\; : \; \lambda \in \C \setminus \{0\}, \; \text{det}(I_p - \lambda P M) = 0 \right\} \right)^{-1}\nonumber\\
		&=&\left(\min\left\{|\lambda|\; : \; \lambda \in \C \setminus \{0\},\; \text{det}(I_p- \lambda PM)=0 \right\}\right)^{-1}\label{eq:mures1} \\
		&=&\rho (PM), \label{eq:mures2} 
	\end{eqnarray}
	where~\eqref{eq:mures1}  follows due to the fact that $\|P\|=1$, since $P$ is partially isometric. The inequality~\eqref{eq:mures2} holds for any partially isometric matrix $P \in \mathbb P$, this implies that  
	\begin{equation}\label{eq:mures3} 
		\mu_{\mathcal S}(M) \geq \sup_{P \in \mathbb P} \rho (PM).
	\end{equation}
	To show equality holds in~\eqref{eq:mures3}, let $\mu_{\mathcal S}(M)= \frac{1}{\delta}$ for some $\delta > 0$, since $\mu_{\mathcal S}(M) \geq 0$ (if $\mu_{\mathcal S}(M)=0$, then there is nothing to prove). Then there exists $\hat \Delta \in \mathbb S$ such that 
	\begin{equation}\label{eq:mures4} 
		\|\hat \Delta \| = \delta \quad \text{and} \quad \text{det}(I_p-\hat \Delta M)=0.
	\end{equation}
	Let $\hat \Delta=\diag(\hat \Delta_1,\ldots,\hat \Delta_n)$, and for each $j=1,\ldots,n$ let $\hat \Delta_j=U_j\Sigma_jV_j^*$ be an SVD of $\hat \Delta_j$, where 
	$U_j\in \C^{p_j, k_j}$, $V_j\in \C^{k_j, k_j}$, and $\Sigma_j \in \C^{k_j \times k_j}$ is the diagonal matrix containing the singular values of $\hat \Delta_j$. Consider $\hat U=\diag(U_1,\ldots,U_n)\in \C^{p, k}$, $\hat V=\diag(V_1,\ldots,V_n)\in \C^{k, k}$, and $\hat \Sigma=\diag(\Sigma_1,\ldots,\Sigma_n)\in \C^{k, k}$. Then the determinant condition in~\eqref{eq:mures4} can be written as 
	\begin{equation*}
		0 =\rm{det}(I_p-\hat \Delta M)=\rm{det}(I_p-\hat U\hat \Sigma \hat V^* M).         
	\end{equation*}
	Thus, in view of Lemma~\ref{lemma:doyle}, the diagonal matrix $X=\hat \Sigma$ is a solution of the polynomial equation $\text{det}(I_p-\hat UX\hat V^*M)=0$ with $X\in \C^{k \times k}$ diagonal, of minimal norm $\|\hat \Sigma\|=\delta=\|\hat \Delta\|$. Indeed, if there exists another solution  $\tilde \Sigma \in \C^{k \times k}$ with $\|\tilde \Sigma \|< \|\hat \Sigma \|=\delta$, then the matrix $\tilde \Delta=\hat U \tilde \Sigma \hat V^*$ satisfies that
	\[
	\|\tilde \Delta\|=\|\tilde \Sigma\|< \|\hat \Sigma\|=\delta \quad \text{and}\quad
	\text{det}(I_p-\tilde \Delta M)=0,
	\]
	which is a contradiction of the fact that $\mu_{\mathcal S}(M)=\frac{1}{\delta}$. Since $\hat \Sigma$ is real and nonnegative, by Lemma~\ref{lemma:doyle} we have that 
	\begin{equation}
		0=\text{det}(I_p-\hat U (\delta I_k)\hat V^*M)=\text{det}(I_p-\delta \hat P M),
	\end{equation}
	where $\hat P=\hat U \hat V^*$ is a partially isometric matrix. This implies that 
	\[
	\rho(PM) \geq \frac{1}{\delta}=\mu_{\mathcal S}(M),
	\]
	which in combination with~\eqref{eq:mures3} proves the equality in~\eqref{eq:mures3}.


\section{Proof of Theorem~\ref{theorem:mu}}\label{Appendix2}

Let the SVD of $M \in \C^{k, p}$ be given by
\begin{equation}\label{eq:svdM}
	M=\sigma_{\max}(M)U_1V_1^* + U_2\Sigma V_2^*,
\end{equation}
where $[U_1\; U_2]$ and $[V_1\; V_2]$ are unitary matrices with $U_1 \in \C^{k, r}$ and $ V_1\in \C^{p, r}$, where $r$ is the multiplicity of the largest singular value $\sigma_{\max}(M)$. For a given $x=(x_1,\ldots,x_n) \in \R^n$, let $M(x)$ be defined by
\begin{equation}
	M(x):=D_1(x)MD_2(-x),
\end{equation}
where $D_1(x)$ and $D_2(x)$ are defined by~\eqref{def:D1} and~\eqref{def:D2} , respectively. 
Then the directional derivative of $\sigma_{\max}(M(x))$ is a linear map $D\sigma_{\max}(M(x)):\R^n\rightarrow \C$ such that 
\[
\lim_{\|h\|\rightarrow 0} \frac{|\sigma_{\max}(M(x+h)) - \sigma_{\max}(M(x))-D\sigma_{\max}(M(x))h|}{\|h\|} = 0,
\]
and in  a neighborhood of zero, we can write $\sigma_{\max}(M(x))$ approximately as 
\begin{equation}\label{sigmaM}
	\sigma_{\max}(M(x))=\sigma_{\max}(M) + D\sigma_{\max}(M(0))x + o(x),
\end{equation}
where $D\sigma_{\max}(M(0))$ denotes the directional derivative of $\sigma_{\max}(M(x))$  evaluated at the zero vector. By~\cite[Theorem 9]{gracia2015directional}, directional derivative of the largest singular value is given by
\begin{equation}\label{der:sigmaM}
	D\sigma_{\max}(M(0))x= \lambda_{\max}\left(\sum_{i=1}^n x_i \Re\left(U_1^*\left(\frac{\partial}{\partial x_i}M(0)\right) V_1\right)\right),
\end{equation}
where $\Re(.)$ stands for the real part.
We know that the partial derivatives at zero depend only on the first order terms of $M(x)$, thus by writing $M(x)$ only upto first order terms, we get  
\begin{equation}\label{M:expand}
	M(x)=D_1(x)MD_2(-x)=M + \sum_{i=1}^nx_iM_i,
\end{equation}
where $M_i$'s for $i=1,\ldots, n$ are given by
\begin{equation}\label{M:Mi}
	M_i=\mat{ccccc} 0_{k_1} & & & &\\ & \ddots & & &\\ & & I_{k_i} & &\\ & & & \ddots & \\ & & & & 0_{k_n} \rix M - M \mat{ccccc} 0_{p_1} & & & &\\ & \ddots & & &\\ & & I_{p_i} & &\\ & & & \ddots & \\ & & & & 0_{p_n} \rix .
\end{equation}
Thus using~\eqref{M:expand} and~\eqref{der:sigmaM} in~\eqref{sigmaM}, we obtain
\begin{equation} \label{eq:temp1sig}
	\sigma_{\max}(M(x)) = \sigma_{\max}(M) + \lambda_{\max}(\sum_{i=1}^n x_i H_i) + o(x),
\end{equation}
where
\begin{align}\label{def:Hi}
	H_i=\real(U_1^*M_iV_1)=\frac{1}{2} (U_1^*M_i V_1 + V_1^*M_iU_1).
\end{align}
Let us write $U_1\in \C^{k, r}$ and $V_1\in \C^{p, r}$ as
\begin{align}\label{eq:temp12}
	U_1=\mat{c}\alpha_1\\ \alpha_2 \\ \vdots \\ \alpha_n  \rix \quad \text{and} \quad  V_1=\mat{c}\beta_1\\ \beta_2 \\ \vdots \\ \beta_n  \rix,
\end{align}
where $\alpha_i\in \C^{k_i, r}$ and $\beta_i\in \C^{p_i, r}$. 
%
This implies that, for each $i=1,\ldots,n$, we have
\begin{align}\label{def:Hi2}
	U_1^*M_iV_1&=\sigma_{\max}(M)\mat{cccc}\alpha_1^*& \alpha_2^*& \cdots & \alpha_n^* \rix \mat{ccccc}& & -\alpha_1\beta_i^* & &\\ & & -\alpha_2\beta_i^* & & \\ & & \vdots & & \\ \alpha_i\beta_1^* & \cdots & 0 & \cdots & \alpha_i\beta_n^* \\ & & \vdots & &\\ & & -\alpha_n\beta_i^* & & \rix \mat{c}\beta_1\\ \beta_2 \\ \vdots \\ \\ \\ \beta_n \rix \nonumber \\
	&=\sigma_{\max}(M)(\alpha_i^*\alpha_i\beta_1^*\beta_1+\alpha_i^*\alpha_i\beta_2^*\beta_2 + \cdots + \alpha_i^*\alpha_i\beta_{i-1}^*\beta_{i-1}+\alpha_i^*\alpha_i\beta_{i+1}^*\beta_{i+1}+\cdots+\alpha_i^*\alpha_i\beta_n^*\beta_n \nonumber \\ & \quad -\alpha_1^*\alpha_1\beta_i^*\beta_i -\alpha_2^*\alpha_2\beta_i^*\beta_i - \cdots -\alpha_{i-1}^*\alpha_{i-1}\beta_i^*\beta_i-\alpha_{i+1}^*\alpha_{i+1}\beta_i^*\beta_i - \cdots -\alpha_n^*\alpha_n\beta_i^*\beta_i)\nonumber \\
	& = \sigma_{\max}(M)(\alpha_i^*\alpha_i(I_r-\beta_i^*\beta_i) - (I_r-\alpha_i^*\alpha_i)\beta_i^* \beta_i) \nonumber \\
	& = \sigma_{\max}(M)(\alpha_i^*\alpha_i - \beta_i^*\beta_i).
\end{align}
Using~\eqref{def:Hi2} in~\eqref{def:Hi}, we obtain
\begin{equation}\label{def:Hi3}
	H_i=\sigma_{\max}(M)\cdot(\alpha_i^*\alpha_i - \beta_i^*\beta_i) \quad \text{for}~i=1,\ldots,n,
\end{equation}
since $(U_1^*M_iV_1)^* = U_1^*M_iV_1$.
We are now ready to prove Theorem~\ref{theorem:mu}. 
%

	\noindent	{\underline{\it Proof of Theorem~\ref{theorem:mu}}}:~
	For any $x\in \R^n$ and $\Delta \in \mathbb{S}$, we have $D_2(-x)\Delta D_1(x)=\Delta$.
	This implies from~\eqref{def:mu} that 
	\begin{align}
		\mu_{\mathcal{S}}(M)&=\left(\inf\left\{\sigma_{\max}(\Delta)\; : \; \Delta \in \mathbb{S}, \; \text{det}(I_p-D_2(-x) \Delta D_1(x)M)=0 \right\} \right)^{-1} \nonumber\\
		& \leq \left(\inf\left\{\sigma_{\max}(\Delta)\; : \; \Delta \in \C^{p, k}, \; \text{det}(I_p-\Delta D_1(x)MD_2(-x) )=0 \right\} \right)^{-1} \nonumber \\
		& = \sigma_{\max}(D_1(x)MD_2(-x)).
	\end{align}
	Thus, we have
	\begin{equation*}
		\mu_{\mathcal{S}}(M)\leq \inf_{x\in \R^n}\sigma_{\max}\left(D_1(x)MD_2(-x)\right).
	\end{equation*}
Let us first show that if $0\in \mathcal{W}(H_1,\ldots,H_n)$, where $\mathcal{W}(H_1,\ldots,H_n)$ is the joint numerical range of matrices $H_1,\ldots,H_n$ defined in~\eqref{def:Hi3}, then we have 
\begin{equation}\label{mu:ubound11}
	\mu_{\mathcal{S}}(M)= \sigma_{\max}\left( D_1(0)MD_2(0)\right)=\sigma_{\max}(M).
\end{equation}
Indeed, if $0\in \mathcal{W}(H_1,\ldots,H_n)$, then there exists $v\in \C^r\setminus \{0\}$ such that $v^*H_iv=0$ for all $i=1,\ldots, n$, which implies from~\eqref{def:Hi3} that $\|\beta_iv\|=\|\alpha_iv\| $ for all $i=1,\ldots,n$. Thus, for each $i=1,\ldots,n$, there exists a partially isometric matrix $P_i\in \C^{p_i, k_i}$ such that 
	\begin{equation}\label{eq:temp11}
		\beta_iv = P_i\alpha_i v.
	\end{equation}
	By setting $\tilde P=\text{diag}(P_1,\ldots,P_n)$, we get that $\tilde P \in \mathbb{P}$ and in view of~\eqref{eq:temp11} and~\eqref{eq:temp12}, we have $V_1v=\tilde PU_1 v$. This implies that 
	$U_1v=U_1V_1^*\tilde PU_1v$. This shows that there exists a nonzero vector $y=U_1v \in  \C^{k, 1}$ such that $y=U_1V_1^*\tilde Py$. Using this in~\eqref{eq:svdM}, we have
	\begin{eqnarray}
		M\tilde Py&=&\left(\sigma_{\max}(M)U_1V_1^* + U_2\Sigma V_2^*\right)\tilde Py \nonumber\\
		&=& \sigma_{\max}(M)U_1V_1^* \tilde Py + U_2\Sigma V_2^* \tilde Py \nonumber \\
		&=& \sigma_{\max}(M)U_1V_1^* \tilde Py \quad \quad (\because \tilde Py=V_1v) \nonumber\\
		&=& \sigma_{\max}(M)y \quad \quad (\because y=U_1V_1^*\tilde Py) \label{eq:eq:y1vector}.
	\end{eqnarray}
	Also, since $\rho(MP)\leq \|M\|=\sigma_{\max}(M)$ for any $P \in \mathbb P$, we have that 
	\begin{equation}\label{eq:eq:y1vector1}
		\sigma_{\max}(M) \leq \rho(M\tilde P) \leq \sup_{P \in \mathbb P}\rho(MP)\leq \|M\|=\sigma_{\max}(M).
	\end{equation}
	Thus from~\eqref{eq:eq:y1vector1} and Theorem~\ref{thm:formula1}, we have that 
	\[
	\sup_{P\in \mathbb{P}}\rho(MP) = \mu_{\mathcal{S}}(M)=\sigma_{\max}(M),
	\]
	which proves~\eqref{mu:ubound11}.

{\it Proof of 1}.~	If $\sigma_{\max}(D_1(\hat x)MD_2(-\hat x))$ is a simple singular value of $\hat{M}:=D_1(\hat x)MD_2(-\hat x)$, then $\sigma_{\max}(D_1(x)MD_2(-x))$ is partially differentiable at $x=\hat x$ and in view of~\eqref{eq:temp1sig} we have
\begin{eqnarray*}
	0=\frac{\partial}{\partial x_i} \sigma_{\max}(D_1(x+\hat x)MD_2(-(x+\hat x))) \Big|_{x=0}= \frac{\partial}{\partial x_i}  \lambda_{\max}(\sum_{i=1}^n x_i \hat H_i) \Big|_{x=0}= 
	\hat{H_i},
\end{eqnarray*}
since $\lambda_{\max}(\sum_{i=1}^n x_i \hat H_i) =\sum_{i=1}^n x_i \hat H_i$,  where $\hat{H_i}$ are $1 \times 1$ matrices defined by~\eqref{def:Hi3} for the matrix $\hat{M}$. This implies that  $\mathcal{W}(\hat H_1,\ldots,\hat H_n) =0$ and hence, in view of~\eqref{mu:ubound11}, equality holds in~\eqref{mu:ubound}.

{\it Proof of 2}.~If the number of blocks in the pertubation matrix $\Delta$ is less than or equal to three, i.e., $n\leq 3$, then $\mathcal{W}(H_1,\ldots,H_n)$ is a convex set, where Hermitian matrices $H_1,\ldots,H_n$ are as defined in~\eqref{def:Hi3}.
This is due to the facts that (a) $H_1+H_2+H_3= 0$ and thus  $\mathcal{W}(H_1,H_2,H_3)$ depends only on two Hermitian matrices, and (b) the joint numerical range of any two Hermitian matrices in convex~\cite{HorJ85}.

		Thus, for $n\leq 3$, equality holds in~\eqref{mu:ubound}, because if any optimization algorithm is implemented, it can proceed until a local minima is found, equivalently, $0\in \mathcal{W}(H_1,\ldots,H_n)$. If suppose $0\not \in\mathcal{W}(H_1,\ldots,H_n)$, then by taking $x_0:=\text{argmin}\{\|x\|~:~x \in \mathcal{W}(H_1,\ldots,H_n)\}$ and $v_0$ an eigenvector corresponding to the smallest eigenvalue of $\sum_{i=1}^n x_{0_{i}}H_i$, we obtain
\begin{equation}\label{eq:remlam}
	\lambda_{\min}(\sum_{i=1}^n x_{0_{i}}H_i)=v_0^*(\sum_{i=1}^n x_{0_{i}}H_i)v_0 = \sum_{i=1}^n x_{0_{i}}v_0^*H_iv_0=\|x_0\|\|y_0\|cos(\theta)\geq \|x_0\|^2 >0,
\end{equation}
where $y_0:=(v_0^*H_1v_0,\ldots,v_0^*H_nv_0)$ and $\theta$ represents angle between $x_0$ and $y_0$. The last inequality in~\eqref{eq:remlam} follows because $\|y_0\|cos(\theta) \geq \|x_0\|$ as $\mathcal{W}(H_1,\ldots,H_n)$ is a convex set and $x_0:=\text{arg min}\{\|x\|~:~x \in \mathcal{W}(H_1,\ldots,H_n)\}$, which implies that the norm of the component of $y$ in the direction of $x_0$ will be larger than the norm of $x_0$. Hence, by~\eqref{eq:temp1sig} there exists $\epsilon_0>0$ such that for all $0<\epsilon< \epsilon_0$, we have
\begin{eqnarray*}
	\sigma_{\max}(M(-\epsilon x_0)) &=& \sigma_{\max}(M) + \lambda_{\max}(\sum_{i=1}^n - \epsilon {x_0}_i H_i) + o(-\epsilon x)\\ &=& \sigma_{\max}(M) - \epsilon \lambda_{\min}(\sum_{i=1}^n {x_0}_i H_i) + o(-\epsilon x) \\
	&<& \sigma_{\max}(M).
\end{eqnarray*}
Implying the possibility of further improvement and hence at the optimal we must have $0\in\mathcal{W}(H_1,\ldots,H_n)$. Hence, in view of~\eqref{mu:ubound11}, equality holds in~\eqref{mu:ubound}.
%

\end{document}